\newcommand{\R}{\mathbb{R}}
\newcommand{\calS}{\mathcal{S}}
\newcommand{\calP}{\mathcal{P}}
\newcommand{\calA}{\mathcal{A}} 
\newcommand{\calD}{\mathcal{D}}
\newcommand{\calC}{\mathcal{C}}
\newcommand{\calH}{\mathcal{H}}
\newcommand{\calR}{\mathcal{R}}
\newcommand{\abs}[1]{\left\vert #1 \right\vert}
\newcommand{\norm}[1]{\left\Vert #1 \right\Vert}
\newcommand{\set}[1]{\left\lbrace #1\right\rbrace}
\newcommand{\sse}{\subseteq}
\newcommand{\sprod}[1]{\left\langle #1 \right\rangle}
\newcommand{\sph}{\mathbb{S}}
\newcommand{\prb}[1]{\mathbb{P}\left( #1 \right)}
\newcommand{\erw}[1]{\mathbb{E}\left( #1 \right)}
\DeclareMathOperator{\conv}{conv}
\DeclareMathOperator{\cone}{cone}
\DeclareMathOperator{\relint}{relint}
\DeclareMathOperator{\dist}{dist}
\DeclareMathOperator{\sgn}{sgn}
\DeclareMathOperator{\supp}{supp}
\DeclareMathOperator{\argmin}{argmin}
\DeclareMathOperator{\pos}{pos}
\newcommand{\gr}{\mathbb{G}}
\newtheorem{lem}{Lemma}
\newtheorem{prop}[lem]{Proposition}
\newtheorem{theo}[lem]{Theorem}
\newtheorem{cor}[lem]{Corollary}
\newtheorem{example}[lem]{Example}
\newtheorem{defi}[lem]{Definition}
\theoremstyle{remark}
\newtheorem{rem}[lem]{Remark}
\title{A Geometrical Stability Condition for Compressed Sensing \footnote{This article has been accepted for publication in Linear Algebra and its Applications. It can be found via its DOI \href{http://dx.doi.org/10.1016/j.laa.2016.04.017}{\tt 10.1016/j.laa.2016.04.017}}}
\author{Axel Flinth\thanks{ E-mail: flinth@math.tu-berlin.de}}
\affil{Institut für Mathematik \\ Technische Universität Berlin}
\begin{document}

\maketitle

\begin{abstract}
During the last decade, the paradigm of compressed sensing has gained significant importance in the signal processing community. While the original idea was to utilize sparsity assumptions to design powerful recovery algorithms of vectors $x \in \R^d$, the concept has been extended to cover many other types of problems. A noteable example is low-rank matrix recovery. Many methods used for recovery rely on solving convex programs.

A particularly nice trait of compressed sensing is its geometrical intuition. In recent papers, a classical optimality condition has been used together with tools from convex geometry and probability theory to prove beautiful results concerning the recovery of signals from Gaussian measurements. In this paper, we aim to formulate a geometrical condition for stability and robustness, i.e. for the recovery of approximately structured signals from noisy measurements.

We will investigate the connection between the new condition with the notion of \emph{restricted singular values}, classical stability  and robustness conditions in compressed sensing, and also to important geometrical concepts from complexity theory. We will also prove the maybe somewhat surprising fact that for many convex programs, exact recovery of a signal $x_0$ immediately implies some stability and robustness when recovering signals close to $x_0$.

\underline{Keywords:} Compressed Sensing, Convex Geometry, Grassmannian Condition Number, Sparse Recovery.

\underline{MSC(2010):} Primary: 52A20, 90C25. Secondary: 94A12.

\end{abstract}

\section{Introduction}

Suppose that we are given linear measurements $b \in \R^m$ of a signal $x_0 \in \R^d$, i.e. $b = Ax_0$ for  some matrix $A \in \R^{m,d}$, and are asked to recover the signal from them. If $d>m$, this will not be trivial, since the map $x_0 \mapsto b$ in that case won't be injective. If, however, one assumes that $x_0$ in some sense is \emph{sparse}, e.g., that many of $x_0$'s entries vanish, we can still recover the signal, e.g. with the help of $\ell_1$-minimization \cite{CandesRombergTao2006}:
\begin{align}
	\min \norm{x}_1 \text{ subject to } Ax=b \tag{$\calP_1$}
\end{align}
This is the philosophy of \emph{compressed sensing}, an area of mathematics which has achieved major attention over the last decade. It has become a standard technique to choose $A$ at random, and then to ask the question how large the number of measurements $m$ has to be in order for $(\calP_1)$ to be successful with high probability. A popular assumption is that $A$ has the Gaussian distribution, i.e., that the entries are i.i.d. standard normally distributed. 

A widely used criterion to ensure that $(\calP_1)$ is successful is the \emph{$RIP$-property}. Put a bit informally, a matrix $A$ is said to possess the $RIP$-property if its $RIP$-constants;
\begin{align*}
	\delta_k = \min  \set{\delta >0 :\forall x \text{ k-sparse } : (1-\delta)\norm{x}_2^2 \leq \norm{Ax}_2^2 \leq (1+\delta)\norm{x}_2^2 },
\end{align*}
are small.

The idea of using convex programs like $(\calP_1)$ to recover structured signals has come to be used in a much wider sense than the one above. Some examples of structure assumptions that have been considered in the literature are dictionary sparsity \cite{davenport2013signal},  block sparsity \cite{stojnic2009reconstruction}, sparsity with prior information \cite{FriedMansSaabYil2012,XuetAl2009,OymakHassibiEtAl2012}, saturated vectors (i.e. vectors with $\abs{x(i)}=\norm{x}_\infty$ for many $i$) \cite{fuchs2011} and low-rank assumptions for matrix completion \cite{candes2010matrix}. Although these problems may seem very different at first sight, they can all be solved with the help of a convex program of the form 
\begin{align}
	\min f(x) \text{ subject to } Ax=b, \tag{$\calP_f$}
\end{align}
where $f$ is some convex function defined on an appropriate space. In all of the mentioned examples above, $f$ is chosen to be a norm, but this is not per se necessary.

The connection between the different convex program approaches was thoroughly investigated in  \cite{chandrasekaran2012convex}, in which the very general case of $f$ being an \emph{atomic norm} was investigated. An atomic norm $\norm{\cdot}_{\calA}$ is thereby the \emph{gauge function} of the convex hull of a compact set $\calA$;
\begin{align*}
	\norm{x}_\calA = \sup \set{t>0 \ \vert \ t x \in \conv \calA}.
\end{align*}
The authors derive bounds on how many Gaussian measurements are required for a convex optimization problem of the form $(\calP_f)$ to be able to recover a signal $x_0$ with high probability.  Their arguments have a geometrical flavor, since they utilize a well known optimality condition regarding the \emph{descent cone} $\calD(\norm{\cdot}_\calA, x_0)$ (see Definition \ref{def:descentCone} and Lemma \ref{lem:descConeKernel} below) of $\norm{\cdot}_\calA$. Other important theoretical tools are \emph{Gaussian widths} and Gordon's \emph{escape through a mesh lemma} \cite{Gordon1988}, which will be discussed in Section \ref{sec:GaussMeas} of this article. The authors of \cite{AmelunxLotzMcCoyTropp2014} make a similar analysis for even more general functions $f$, only assuming that they are convex. They use the so called \emph{statistical dimension} of the descent cone  for determining the threshold value of measurements. 

\subsection*{The Problem of Stability and Robustness}
In applications, the linear measurements $b$ are often contaminated with noise. This means that we are actually given data $b = Ax+n$, where $n$ is a noise vector. A popular assumption is that $n$ is bounded in $\ell_2$-norm, i.e. $\norm{n}_2 \leq \epsilon$. Moreover, it is not often not entirely realistic to assume that the signal $x_0$ is exactly sparse (or more generally does not exactly have the structure assumed by the model), but rather that the distance to the set of sparse (structured) signals $\calC$ is small, i.e.
\begin{align}
	\dist_{\norm{\cdot}_*}(x_0, \calC) = \inf_{c \in \calC} \norm{x_0 -c}_*,
\end{align}
where $\norm{\cdot}_*$ is some norm. If the above quantity is small for a vector $x_0$, we will call it \emph{approximately structured}.

There are several approaches to approximately recover approximately structured signals from noisy measurements -- the one we will consider is the following \emph{regularized convex optimization problem:}
\begin{align*}
	\min f(x) \text{ subject to } \norm{Ax-b}_2 \leq \epsilon. \tag{$\calP_f^\epsilon$}
\end{align*}
This approach was investigated already in the earliest works on compressed sensing, where
 $f = \norm{\cdot}_1$ and $\calC = \calS_k =\set{k\text{-sparse signals}}$ \cite{CandesRombergTao2006}. In short, it turns out that (somewhat stronger) assumptions on the $RIP$-constants suffice to prove that the solution $x^*$ of $(\calP_f^\epsilon)$ in this case satisfies a bound of the form
\begin{align*}
	 \norm{x-x^*}_2 \leq \kappa_1 \dist_{\norm{\cdot}_1} (x_0, \calS_k) + \kappa_2 \epsilon,
\end{align*}
where $\kappa_1$ and $\kappa_2$ depend on the $RIP$-constants of $A$. In this paper, we will formulate and investigate a geometrical criterion for general convex programs $(\calP_f^\epsilon)$ to satisfy such a bound, at least when $f$ is a norm. We will call such programs \emph{robust} (with respect to noise) and \emph{stable} (with respect to distance of $x_0$ to $\calC$). The criterion, which we will call the \emph{Angular Separation Criterion} or $ASC$, will only depend on the relative positions of the kernel of $A$ and the descent cone $\calD(f, x_0)$. More specifically, we will prove that if the mentioned sets have a positive angular separation, $(\calP_f^\epsilon)$ will be robust and, in the case of $f$ being a norm, stable. 

We will furthermore relate the $ASC$ to known criteria for stability and robustness from the literature. We will also prove the somewhat remarkable fact that for a very large class of norms, the $ASC$ is in fact \emph{implied} by the ability of $(\calP_f)$ to recover signals exactly from \emph{noiseless} measurements. The idea originates from a part of the Master Thesis \cite{Flinth2015Master} of the author.

\subsection*{Related Work and Contributions of This Paper}
	Some research towards a geometrical understanding of the stability of compressed sensing has already been conducted. Here we list a few of the approaches that have been considered. First, we would like to mention the so called $RIP$-$NSP$-condition from the recent paper \cite{cahill2015gap}. This paper only deals with the classical compressed sensing setting, i.e., that the signal of interest is approximately sparse and $\ell_1$-minimization is used to recover it. A matrix $A$ is said to satisfy the $RIP$-$NSP$-condition if there exists another matrix $B$, which has the $RIP$-property, such that $\ker A = \ker B$. The authors of \cite{cahill2015gap} prove that this is enough to secure stability and robustness of $(\calP_1^\epsilon)$. This is intriguing, as it shows that stability and robustness can be secured by only considering the kernel of the measuring matrix.
	
	Another line of research is the so called \emph{Robust Width Property}, which was developed in \cite{cahill2014robwidth}. The authors of said article define \emph{compressed sensing spaces}, a general framework that covers both different types of sparsity in $\R^d$ as well as the case of low rank matrices. The main part of the definition of a compressed sensing space is a norm decomposability condition; if $\norm{\cdot}$ is the norm induced by the inner product of the Hilbert space $\calH$, and $\norm{\cdot}_*$ is another norm on $\calH$, $(\calH, \calC,\norm{\cdot}_{*})$ is said to be a compressed sensing space with bound $L$ if for every $a$ in the subset $\calC\sse \calH$ and $z \in \calH$, there exists a decomposition $z=z_1 +z_2$ such that
	\begin{align*}
		\norm{a+z}_{*} = \norm{a}_{*} + \norm{z_1}_{*} \ , \ \norm{z_2}_{*} \leq L \norm{z}.
	\end{align*}
	A matrix is then said to satisfy the $(\rho, \alpha)$-\emph{robust width property} if for every $z \in \calH$ with $\norm{Az}\leq \alpha \norm{z}$, we have $\norm{z}\leq \rho \norm{z}_{*}$. This robust width property is in fact equivalent with the stability and robustness of $(\calP_f^\epsilon)$ with $f(x) = \norm{x}_{*}$. One large difference between this approach and ours is that we do not require any norm decomposability conditions. For instance, as was pointed out in \cite{cahill2015gap}, the $\ell_\infty$-norm in $\R^d$ is not well suited to be included in this framework.
	
	As for the problem of stability of recovering almost sparse vectors, we want to mention the paper \cite{xu2011precise}. The authors of that article carry out an asymptotic analysis of the threshold amount of Gaussian measurements needed for the classical technique of $\ell_1$-minimization for sparse recovery to be stable. The analysis heavily relies on the theory of so called \emph{Grassmannian angles} of polytopes, which is a purely geometrical concept. This approach has connections to, but is still relatively far away from, the one in this work. In particular, it is by no means straight-forward, if at all possible, to generalize it other problems than $\ell_1$-minimization.
	
	The condition presented in this paper is highly related to several other geometric stability measures: so called \emph{restricted singular values} \cite{amelunxen2014gordon} of matrices on cones, as also \emph{Renegar's condition number} \cite{BelloniFreund2007,renegar1992} as well as the \emph{Grassmannian condition number} \cite{amelunxenBurgisser2012}  (see Section \ref{sec:ASCvsConds}). In fact, already in the previously mentioned paper \cite{chandrasekaran2012convex}, it is proven that if the smallest singular value of $A$ restricted to the descent cone of the functional $f$ does not vanish, we will have stability. During the final review of this paper, the author was made aware of the fact that also the connection between Renegar's Condition number and robustness of compressed sensing has recently been investigated in \cite{Roulet2015Renegar}. 
	
	This work provides a new, more elementary, perspective to the above mentioned notions, and in particular establishes its relations to classical criteria for stability in compressed sensing. Another contribution of this paper is the observation that if $f$ is a norm, the criterion also implies stability, an observation which, to the best of the knowledge of the author, has not been done before. 
	
	\subsection*{Notation}
	Throughout the whole paper, $\calH$ will denote a general, finite-dimensional Hilbert space. The corresponding inner product will be denoted $\sprod{\cdot, \cdot}$, and the induced norm by $\norm{\cdot}$. For a subspace $U \sse \calH$, we will write
	\begin{align*}
		\norm{x +U} = \inf_{u \in U} \norm{x+u}.
\end{align*}		 
Note that due to the finite-dimensionality of $\calH$, this infimum is in fact attained.

In the final part of the paper, we will deal with Gaussian vectors and Gaussian matrices. A random vector, or linear map, is said to be Gaussian if its representation in an orthonormal basis, or in a pair of such, has i.i.d. standard normally distributed entries.

The entries of a vector $x$ in $\R^d$ will be denoted $x(1), x(2), \dots, x(d)$.  $\R^d$ is equipped with the standard scalar product whose induced norm is the $\ell_2$-norm:
\begin{align*}
	\sprod{x,y} = \sum_{i=1}^d x(i)y(i), \quad \norm{x}_2 = \sqrt{\sum_{i=1}^d x(i)^2}.
\end{align*}
	




\section{A Geometrical Robustness Condition} \label{sec:geomStab}

Let us begin by considering the most classical compressed sensing setting: that is, the problem of retrieving a signal $x_0 \in \R^d$  with few non-zero entries from exact measurements $b = Ax_0$ using $\ell_1$-minimization (i.e. $(\calP_1)$).
One of the most well known criteria for $(\calP_1)$ to be successful is the so called Null Space Property, $NSP$. A matrix $A$ satisfies the $NSP$ with respect to the index set $S_0 \sse \set{1, \dots d}$ if for every $\eta \in \ker A$, we have
\begin{align} \label{eq:NSP}
	\sum_{i \in S_0} \abs{\eta(i)} < \sum_{i \notin S_0} \abs{\eta(i)}.
\end{align}
The $NSP$ with respect to $S$ is in fact equivalent to $(\calP_1)$ recovering a signal $x_0$ with support $S$ \cite{MathIntroToCS}. The $NSP$ has a geometrical meaning. In order to explain it, let us first define \emph{descent cones} of  a function $f$.

\begin{defi} \label{def:descentCone}
	Let $\calH$ be a finite-dimensional Hilbert space, and $f: \calH \to \R$. The \emph{descent cone} $\calD(f,x_0)$ of $f$ at the point $x_0 \in \calH$ is the cone generated by the descent directions of $f$ at $x_0$, i.e. 
	\begin{align*}
		\calD(f,x_0) = \set{\eta \ \vert \ \exists \tau>0 : f(x_0 + \tau \eta) \leq f(x_0)}.
	\end{align*}
\end{defi}

\begin{example} \label{ex:l1desc}
	The descent cone of the $\ell_1$-norm at a vector $x_0$ supported on the set $S_0$ is given by
	\begin{align*}
		\calD(\norm{\cdot}_1,x_0)= \Big\{\eta \ \vert \ \sum_{i \notin S_0} \abs{\eta}_i \leq -\sum_{i \in S_0} \sgn(x_0(i)) \eta_i \Big\}.
	\end{align*}
	\underline{Proof:} Since the conditions for $\eta$ to belong to both the left hand and the right hand set, respectively, is invariant under scaling, we may assume that $\eta$ has small norm. Then we have
	\begin{align*}
		\abs{x_0(i)+\eta(i)} = \begin{cases} \abs{x_0(i)}+\sgn(x_0(i))\eta_i & \ i \in S_0 \\
											\abs{\eta(i)} & \ i \notin S_0 \end{cases},
	\end{align*}
	which implies $\norm{x_0+\eta}_1 - \norm{x_0} = \sum_{i \notin S_0} \abs{\eta}_i + \sum_{i \in S_0} \sgn(x_0(i)) \eta_i$. This is smaller than or equal to zero exactly when $\sum_{i \notin S_0} \abs{\eta}_i  \leq - \sum_{i \in S_0} \sgn(x_0(i)) \eta_i$
\end{example}

With the last example in mind, it is not hard to convince oneself that Equation \eqref{eq:NSP} exactly states that the vector $\eta$ does not lie in the descent cone of any signal supported on the set $S_0$. I.e., the $NSP$ actually reads
\begin{align*}
	\forall x_0: \supp x_0 = S_0: \calD(\norm{\cdot}_1, x_0) \cap \ker A = \set{0}.
\end{align*}
This observation can be generalized to more general situations, as the following well-known lemma shows.
\begin{lem}\label{lem:descConeKernel} (E.g. \cite[Proposition 2.1]{chandrasekaran2012convex}.)
Let $f: \calH \to \R$ be convex, $A : \calH \to \R^m$ be linear and consider the program $(\calP_f)$, with $b=Ax_0$ for noiseless recovery of the signal $x_0$. The solution of $(\calP_f)$ is equal to $x_0$ if and only if
\begin{align}
	\calD(f, x_0) \cap \ker A = \set{0}. \label{eq:geomExactCond}
\end{align}
\end{lem}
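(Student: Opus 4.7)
The claim is an \emph{if and only if} about uniqueness of the minimizer of $(\calP_f)$, and the natural strategy is to directly unpack the definition of $\calD(f,x_0)$, translating the geometric intersection condition into an analytic one via feasibility plus descent.

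For the forward implication, I would argue by contrapositive. Suppose there exists some $\eta \in \calD(f,x_0)\cap \ker A$ with $\eta\neq 0$. By the very definition of the descent cone, there is a scalar $\tau>0$ such that $f(x_0+\tau\eta)\leq f(x_0)$. Since $A\eta=0$, the point $x_0+\tau\eta$ satisfies $A(x_0+\tau\eta)=Ax_0=b$, hence it is feasible for $(\calP_f)$. As $\tau\eta\neq 0$, this produces a feasible point distinct from $x_0$ whose objective value is no larger than $f(x_0)$, so $x_0$ is not the unique solution. The only subtle point here is that one must pass through the scaled point $x_0+\tau\eta$, not $x_0+\eta$ directly, because the descent cone is defined by the \emph{existence} of some positive step, not necessarily the unit step.

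For the reverse implication, assume $\calD(f,x_0)\cap\ker A=\{0\}$ and let $x$ be any feasible point with $x\neq x_0$. Then $\eta := x-x_0$ lies in $\ker A$ and is non-zero, so by assumption $\eta\notin\calD(f,x_0)$. Unpacking the definition, this means that there is \emph{no} $\tau>0$ with $f(x_0+\tau\eta)\leq f(x_0)$; specialising to $\tau=1$ yields $f(x)=f(x_0+\eta)>f(x_0)$. Hence $x_0$ strictly beats every other feasible point and is the unique minimizer.

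I do not expect any real obstacle: both directions are essentially a direct translation between the definition of $\calD(f,x_0)$ and feasibility in $(\calP_f)$. Convexity of $f$ is not needed for this particular equivalence (the descent cone is automatically a cone by rescaling $\tau$), but it is the natural setting of the paper and could be invoked if one wanted to upgrade the statement, e.g.\ to characterise when $x_0$ is merely \emph{a} solution versus the unique one.
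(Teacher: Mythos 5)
Your proof is correct. The paper itself gives no proof of this lemma---it is cited from Chandrasekaran et al.\ \cite{chandrasekaran2012convex}---and your argument is exactly the standard one: both directions follow by translating membership of $x-x_0$ in $\ker A$ into feasibility and membership in $\calD(f,x_0)$ into the existence of a descent step. Your side remark is also accurate: with the paper's definition of $\calD(f,x_0)$ via ``$\exists \tau>0$'' (no closure taken), the set is automatically a cone and the equivalence needs no convexity of $f$; convexity only becomes relevant for variants of the statement that use the \emph{closed} descent cone.
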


\begin{figure}
\centering
\includegraphics[scale=.3]{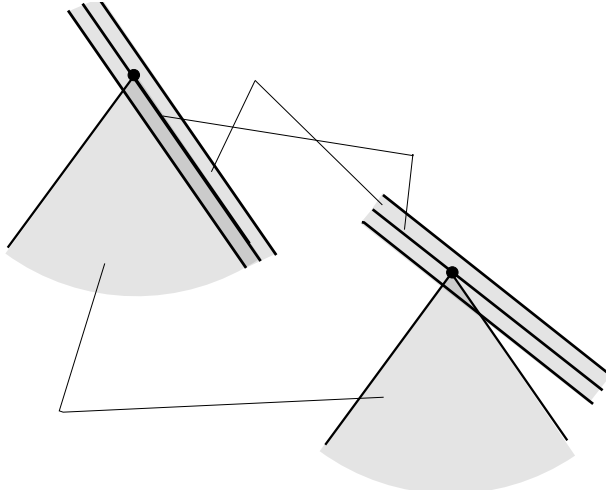}
\caption{ The impact of an angular separation of $\calD(f,x_0)$ and $\ker A$. Note that locally, the sets $x_0 +\calD(f,x_0)$ and $\set{x \vert f(x) \leq f(x_0)}$ have the same structure. \label{fig:angsep}}
\end{figure}

Can we use the previous lemma to develop a geometrical intuition of what we have to assume in order to prove stability and robustness of the recovery using $(\calP_f^\epsilon)$? The main difference between the program $(\calP_f)$ and $(\calP_f^\epsilon)$ is that the former is only allowed to search for a solution in the set $x_0 +\ker A$, while the latter can search in a tubular neighborhood of the same set. Figure \ref{fig:angsep} suggests that if the descent cone $\calD(f, x_0)$ and $\ker A$ do not only trivially intersect each other, but also have an angular separation, it should be possible to prove that the intersection of the mentioned tubular neighborhood and the set $\set{x \vert f(x) \leq f(x_0)}$ is not large. This should in turn imply robustness. (In fact, this intuition was used already in \cite{CandesRombergTao2006} when proving robustness in the original compressed sensing setting.) To provide a precise formulation of angular separation, we first define the \emph{$\theta$-expansion of a cone}.

\begin{defi} \label{def:muExp}
Let $\calH$ be a finite-dimensional Hilbert space with norm $\norm{\cdot}$ and scalar product $\sprod{\cdot,\cdot}$. Let further $C\sse \calH$ be a convex cone, i.e. a convex set with $\tau C=C$ for every $\tau>0$. Then for $\theta \in [0, \pi]$, we define the $\theta$-expansion $C^{\wedge \theta}$ as the set
\begin{align*}
	C^{\wedge \theta} = \set{x \in \calH \ \big\vert \ \exists y \in C : \sprod{x,y} \geq \cos(\theta) \norm{x} \norm{y}}.
\end{align*} 
\end{defi}
For an illustration of the relation between a cone $C$ and its $\theta$-expansion $C^{\wedge \theta}$, see Figure \ref{fig:thetaExp}. Before moving on, let us make some remarks.

\begin{figure}
\centering
	\includegraphics[scale=.45]{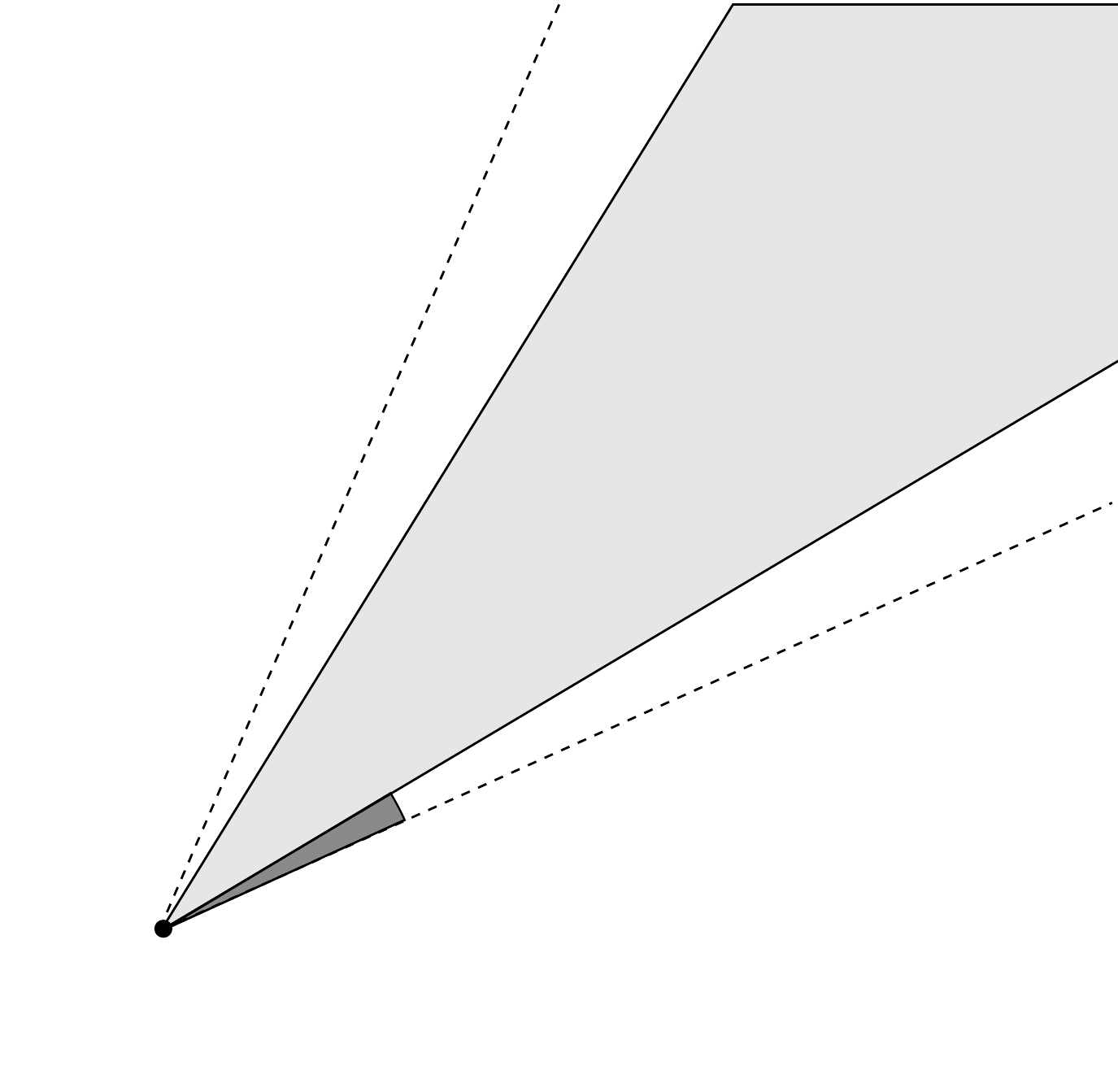}
	\caption{A convex cone $C$ and its $\theta$-expansion $C^{\wedge \theta}$. \label{fig:thetaExp}}
\end{figure}

\begin{rem}
\begin{enumerate}[(i)]
\item $C = C^{\wedge 0} \sse C^{\wedge \theta}$ for every $\theta >0$.

\item If $C$ is closed, $C^{\wedge \theta}$ can alternatively defined as the set
\begin{align*}
	C^{\wedge \theta}=\cone (\{x \in \calH \ \big\vert  \ \norm{x}=1, \sup_{y \in C, \norm{y}=1}  \sprod{x,y} \geq \cos(\theta)\} ).
\end{align*}

\item $C^{\wedge \theta}$ is always a cone, but not always convex. As a concrete counterexample, consider the closed, convex cone $K \sse \R^3$
	\begin{align*}
		K= \cone \left(\set{x \in \R^3 \ \big\vert \ x(1)=1, x(2)=0, \abs{x(3)} \leq 1}\right).
\end{align*}	 
	Using the previous remark, we can calculate $K^{\wedge \theta}$ exactly. We have for $x\in \R^3$
	\begin{align*}
		\sup_{y \in K, \norm{x}=1} \sprod{x,y} = \sup_{t\in [-1,1]} \frac{x(1) + t x(3)}{\sqrt{1+t^2}} = \begin{cases} \sqrt{x(1)^2 + x(3)^2} &\text{if } \abs{x(3)}\leq x(1) \\  
		\frac{x(1) + \abs{x(3)}}{\sqrt{2}}& \text{else,}
		 \end{cases}
	\end{align*}
	where the last equality can be proven using elementary calculus. The last remark now tells us that $K^{\wedge \frac{\pi}{6}}$ is given by
	\begin{align*}
		\set{ y\in \R^3 \ \big\vert \ \frac{\sqrt{3}\norm{y}_2}{2} \leq \begin{cases} \sqrt{y(1)^2 + y(3)^2} &\text{if } \abs{y(3)}\leq y(1) \\  
		\frac{y(1) + \abs{y(3)}}{\sqrt{2}}& \text{else.} \end{cases}}.
	\end{align*}
	This set is not convex; for instance, the points $y_1=\left(1,\sqrt{\frac{2}{3}},1\right)$ and $y_2=\left(1,\sqrt{\frac{2}{3}},-1\right)$ are both contained in the set, whereas $\frac{1}{2}y_1 + \frac{1}{2}y_2 = \left(1,\sqrt{\frac{2}{3}},0\right)$ is not. See also Figure \ref{fig:nonConvExt}.
	\end{enumerate}
	
\end{rem}



	\begin{rem} \label{rem:angleMetric} It is not hard to convince oneself that 
		\begin{align*}
		\delta(x,y) = \arccos(\sprod{x,y})
		\end{align*}
		defines a metric on $\sph^{d-1}$. In particular, the triangle inequality holds:
		\begin{align*}
		\arccos(\sprod{x,y})\leq\arccos(\sprod{x,z})+\arccos(\sprod{z,y}).
		\end{align*}
	\end{rem}

After these preparations, we may state our robustness condition.
\begin{prop} \label{lem:geomConvStab}
	Let $x_0 \in \calH$ and $ A : \calH \to \R^m$ be linear. Consider the program $(\calP_f^\epsilon)$,
	where $f: \calH \to \R$ is convex and $b= Ax_0 +n$ with $\norm{n}\leq \epsilon$. If there exists a $\theta>0$ such that
	\begin{align} \label{eq:geomConvStab}
		\mathcal{D}^{\wedge \theta}(f,x_0) \cap \ker A = \set{0},
	\end{align}
	then there exists a constant $\kappa>0$ so that the solution $\hat{x}$ of $(\mathcal{P}^\epsilon_f)$ obeys
	\begin{align*}
		\norm{ \hat{x}-x_0} \leq \kappa \epsilon.
	\end{align*}
	$\kappa$ depends on $\mu$ and the smallest non-zero singular value $\sigma_{\min}(A)$ of $A$.
\end{prop}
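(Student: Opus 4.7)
The plan is to translate the geometric separation hypothesis into a quantitative control on the error vector $\eta := \hat{x} - x_0$ by decomposing it relative to $\ker A$. First, I would observe that $x_0$ is itself feasible for $(\calP_f^\epsilon)$, because $\norm{Ax_0-b} = \norm{n} \leq \epsilon$, so optimality of $\hat{x}$ yields $f(\hat{x}) \leq f(x_0)$; by the very definition of the descent cone this places $\eta$ in $\calD(f,x_0)$. The triangle inequality gives at the same time
\begin{align*}
\norm{A\eta} \leq \norm{A\hat{x}-b} + \norm{b-Ax_0} \leq 2\epsilon.
\end{align*}

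The next step is to split $\eta = P\eta + (\eta - P\eta)$, where $P$ denotes the orthogonal projection onto $\ker A$, and to show that the component $P\eta$ lying in the kernel cannot be too large relative to $\eta$. Suppose $P\eta \neq 0$. Since $P\eta \in \ker A$ is a nonzero vector, the hypothesis $\calD^{\wedge \theta}(f,x_0) \cap \ker A = \set{0}$ forbids $P\eta$ from belonging to $\calD^{\wedge \theta}(f,x_0)$; by Definition \ref{def:muExp} this means $\sprod{P\eta, \zeta} < \cos(\theta)\norm{P\eta}\norm{\zeta}$ for every $\zeta \in \calD(f,x_0)$, and in particular for $\zeta = \eta$. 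Because $\eta - P\eta$ is orthogonal to $\ker A$, we have $\sprod{P\eta, \eta} = \norm{P\eta}^2$, so the previous inequality simplifies to $\norm{P\eta} \leq \cos(\theta) \norm{\eta}$ (the trivial case $P\eta = 0$ also satisfies this). Pythagoras then yields
\begin{align*}
\norm{\eta - P\eta}^2 = \norm{\eta}^2 - \norm{P\eta}^2 \geq \sin^2(\theta) \norm{\eta}^2.
\end{align*}

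To close the argument, I would use that $\eta - P\eta \in (\ker A)^\perp$, so $A$ restricted to this subspace is injective and satisfies $\norm{A v} \geq \sigma_{\min}(A)\norm{v}$ there. Since $A\eta = A(\eta - P\eta)$, this gives
\begin{align*}
\norm{A\eta} \geq \sigma_{\min}(A) \norm{\eta - P\eta} \geq \sigma_{\min}(A) \sin(\theta) \norm{\eta}.
\end{align*}
Combining with $\norm{A\eta} \leq 2\epsilon$ produces $\norm{\hat{x}-x_0} \leq \kappa \epsilon$ with $\kappa = 2/(\sin(\theta)\sigma_{\min}(A))$, which matches the proposition's dependencies. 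The only step requiring a little care is the passage from the set-theoretic hypothesis on $\calD^{\wedge \theta}(f,x_0)$ to the pointwise inequality $\sprod{P\eta, \eta} \leq \cos(\theta)\norm{P\eta}\norm{\eta}$; everything else is routine linear algebra. I do not anticipate any serious obstacle, since the proof does not use convexity of $\calD(f,x_0)$ and survives the non-convexity of the $\theta$-expansion highlighted in the preceding remark.
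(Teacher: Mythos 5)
Your proof is correct, and it arrives at the same constant $\kappa = 2/(\sin(\theta)\,\sigma_{\min}(A))$ as the paper, but it is organized differently. The paper does not prove the proposition directly: it factors the argument through the restricted singular value $\sigma_{\calD(f,x_0)\to\R^m}(A)$, combining Lemma \ref{lem:singValMeansStab} (robustness from a positive restricted singular value, cited from \cite{amelunxen2014gordon,chandrasekaran2012convex} with the proof omitted) with Lemma \ref{lem:singValAndGeomConv} (the bound $\sigma_{\calD(f,x_0)\to\R^m}(A)\geq \sin(\theta)\sigma_{\min}(A)$ under the $\theta$-$ASC$). Your argument inlines both ingredients: the feasibility of $x_0$, the membership $\eta\in\calD(f,x_0)$, and $\norm{A\eta}\leq 2\epsilon$ reproduce the content of Lemma \ref{lem:singValMeansStab}, while your projection step $\norm{P\eta}\leq\cos(\theta)\norm{\eta}$, hence $\norm{\eta-P\eta}\geq\sin(\theta)\norm{\eta}$, is exactly the distance-to-kernel estimate hidden in the proof of $(1)\Rightarrow(2)$ of Lemma \ref{lem:singValAndGeomConv} (there written as $\norm{x-y}^2\geq\sin^2(\theta)$ for unit $x\in C$, $y\in\ker A$). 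What your route buys is a self-contained proof that does not rely on the externally cited lemma; what the paper's route buys is the explicit link to restricted singular values, which it exploits later (e.g.\ in Section \ref{sec:ASCvsConds} and in the exact-recovery-implies-stability argument). Two cosmetic points: you should note that one may assume $\theta\leq\pi/2$ (so that $\cos(\theta)\geq 0$ and the case $P\eta=0$ is genuinely trivial), and that the proposition's statement of the dependence of $\kappa$ on ``$\mu$'' is a typo for $\theta$, which your constant correctly reflects.
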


For simplicity, we will call \eqref{eq:geomConvStab} the \emph{$\theta$-angular separation condition}, or $\theta$-$ASC$. We will in fact not prove this proposition directly. Instead, we first establish a connection to so called \emph{restricted singular values} of the matrix $A$, which then implies Proposition \ref{lem:geomConvStab} as a corollary.

\begin{figure}
\begin{centering}
	\includegraphics[scale=.3]{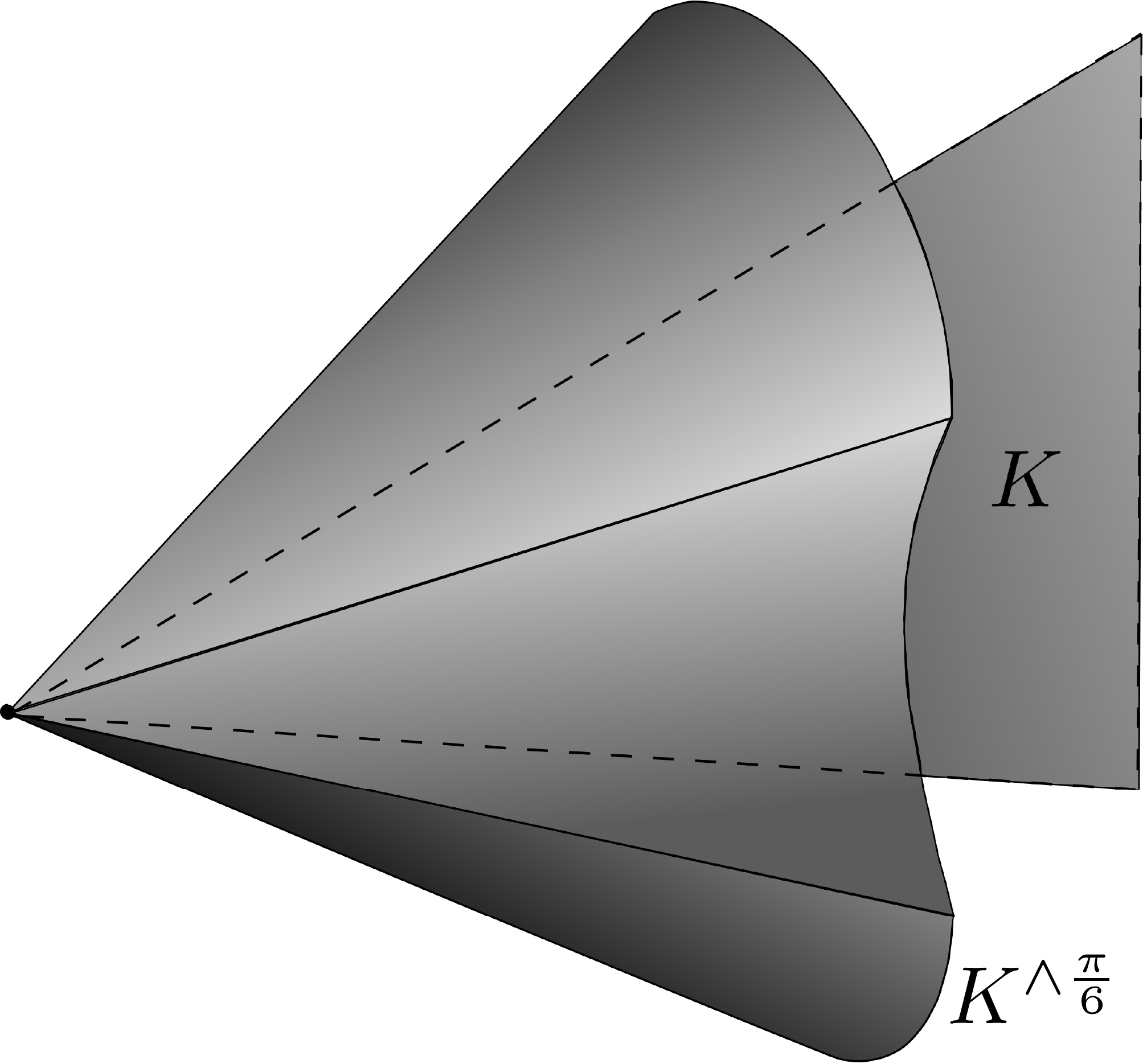}
	\caption{The convex cone $K$ and its non-convex $\frac{\pi}{6}$-extension $K^{\wedge \frac{\pi}{6}}$ \label{fig:nonConvExt}}
	\end{centering}
\end{figure}

The concept of restricted singular values was extensively studied in the paper \cite{amelunxen2014gordon}. There, the singular value of a linear map $A: \calH \to \R^m$ restricted to the cones $C \sse \calH$ and $D \sse \R^m$ was defined as by
\begin{align*}
	\sigma_{C \to D} (A) = \min_{x \in C, \norm{x}=1} \norm{\Pi_D Ax}_2,
\end{align*}
where $\Pi_D$ denotes the \emph{Euclidean} (\emph{metric}, \emph{orthogonal}) \emph{projection} (or \emph{nearest point map}) of the convex set $D$:
	$$ \Pi_D(x) = \argmin_{y \in D} \norm{x-y}_2.$$
 (See also, for instance, \cite{AmelunxLotzMcCoyTropp2014}.) If $D= \R^m$, one also speaks of the \emph{minimal gain} \cite{chandrasekaran2012convex}. 

Interesting for us is that if $\sigma_{\calD(f,x_0) \to \R^m} (A)>0$, $(\calP_f^\epsilon)$ will be robust.
\begin{lem} \label{lem:singValMeansStab} (See \cite{amelunxen2014gordon},
\cite[Proposition 2.2]{chandrasekaran2012convex}.) Let $f: \calH \to \R$ be convex and $A: \calH \to \R^m$ be linear with $m< \dim \calH$. If $\sigma_{\calD(f,x_0) \to \R^m} (A)>0$, any solution $x^*$ of $(\calP_f^\epsilon)$ obeys
\begin{align*}
	\norm{x^* - x_0}_\calH \leq \frac{2\epsilon}{\sigma_{\calD(f,x_0) \to \R^m}}.
\end{align*}
\end{lem}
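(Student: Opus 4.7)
The plan is to extract the stability bound directly from feasibility of the ground truth, convexity of $f$, and the definition of the restricted singular value; the proof should essentially be a short chain of inequalities.

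First I would verify that $x_0$ itself is a feasible point of $(\calP_f^\epsilon)$. Indeed $\norm{Ax_0 - b} = \norm{-n} \leq \epsilon$, so any minimizer $x^*$ of $(\calP_f^\epsilon)$ must satisfy $f(x^*) \leq f(x_0)$. Setting $\eta := x^* - x_0$ and choosing $\tau = 1$ in the definition of the descent cone, this inequality tells us that $\eta \in \calD(f,x_0)$. (If $\eta = 0$ the conclusion is immediate, so we may assume $\eta \neq 0$.)

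Next I would control $\norm{A\eta}_2$ from above by the triangle inequality:
\begin{align*}
\norm{A\eta}_2 \leq \norm{Ax^* - b}_2 + \norm{b - Ax_0}_2 \leq \epsilon + \norm{n}_2 \leq 2\epsilon,
\end{align*}
using that $x^*$ is feasible and that $b = Ax_0 + n$ with $\norm{n}_2 \leq \epsilon$.

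Finally I would use the definition of the restricted singular value. With $D = \R^m$ the projection $\Pi_D$ is the identity, so
\begin{align*}
\sigma_{\calD(f,x_0) \to \R^m}(A) = \min_{x \in \calD(f,x_0), \, \norm{x}=1} \norm{Ax}_2,
\end{align*}
and by homogeneity $\norm{A\eta}_2 \geq \sigma_{\calD(f,x_0) \to \R^m}(A) \cdot \norm{\eta}$ for every $\eta \in \calD(f,x_0)$. Combining with the upper bound $\norm{A\eta}_2 \leq 2\epsilon$ and dividing by the (strictly positive, by hypothesis) restricted singular value yields the claim.

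There is no real obstacle here — the argument is essentially an unpacking of definitions. The only point that needs a moment's thought is ensuring $\eta \in \calD(f,x_0)$ (which follows from the convention that the descent cone contains $0$ and uses only $f(x^* ) \leq f(x_0)$ rather than a strict inequality), and noting that the hypothesis $m < \dim \calH$ only serves to guarantee that the descent cone is nontrivial in interesting cases; it is not actually used in the inequality itself.
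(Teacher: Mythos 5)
Your proof is correct and is essentially the standard argument that the paper itself defers to (it omits the proof, noting that the one in \cite{chandrasekaran2012convex} carries over ``line for line''): feasibility of $x_0$ gives $f(x^*)\leq f(x_0)$ and hence $x^*-x_0\in\calD(f,x_0)$, the triangle inequality gives $\norm{A(x^*-x_0)}_2\leq 2\epsilon$, and the restricted singular value converts this into the claimed bound. Your side remarks (convexity and $m<\dim\calH$ are not actually used in the chain of inequalities) are also accurate.
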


 \begin{rem} \begin{enumerate}[(i)]
\item Note that in the references we cited, the lemma is proved under slightly less general conditions (e.g., the function $f$ is assumed to be a norm). The proof does however work, line for line, also in our case, and we therefore omit it, for the sake of brevity.
 
\item 	As is indicated by the formulation of  Lemma \ref{lem:singValMeansStab}, the solution vector $x^*$ by no means has to be unique, even for arbitrarily small $\epsilon >0$. We can construct an example of such a situation as follows: Consider a sparse vector $x_0 \in \R^d$ supported on the set $S$ and a matrix $A \in \R^{m,d}$ such that $\sigma_{\calD(\norm{\cdot}_1, x_0) \to \R^m}>0$ - i.e. in particular that $x_0$ is recovered from its exact measurements by $\calP_{1}$. Suppose that the solution $x^*$ of $\calP_{1}^\epsilon$ is unique for some $\epsilon >0$ and is supported on a set $S'$ which is larger than $S$ (this is arguably the most common situation). For any $i \in S' \backslash S$, consider the matrix $\widetilde{A} \in \R^{m,d+1}$ formed by concatenating $A$ with a copy of the $i$:th column of $A$, and the vectors $\tilde{x}^*$ and $\tilde{x}_0$ formed by concatenating $x^*$ and $x_0$ with a zero, respectively. It is then clear that $\tilde{x}_0$ still is recovered from the exact measurements $b= \widetilde{A}\tilde{x}_0$ via the exact program $\calP_{1}$, and hence (see Section \ref{sec:ExactImpliesStable}) $\sigma_{\calD(\norm{\cdot}, \tilde{x}_0) \to \R^m}( \tilde{A})>0$, but that any vector
\begin{align*}
	 \tilde{x}^*_\theta = \theta \tilde{x}^* + (1-\theta) e_{d+1}
\end{align*}
for $\theta \in [0,1]$ solves $\calP_{1}^\epsilon$. Hence, the solution of the relaxed problem $\calP_{1}^\epsilon$ for $\widetilde{A}$ is not unique.
\end{enumerate}
 \end{rem}
 
 We now prove that the $ASC$ to some extent is equivalent to $\sigma_{\calD(f,x_0) \to \R^m} (A)>0$.
 
\begin{lem} \label{lem:singValAndGeomConv} Let $C \sse \calH$ be a non-empty convex cone and $A : \calH \to \R^d$ be a linear map. Then the following are equivalent
\begin{enumerate}[(1)]
	\item There exists a $\theta>0$ such that $C^{\wedge \theta} \cap \ker A = \set{0}$.
	\item $\sigma_{C \to \R^m}(A) >0$.
\end{enumerate}
In particular, if $\sigma_{\min/\max}(A)$ denotes the smallest/largest non-vanishing singular value of $A$, respectively, we have for every $\theta$ with $C^{\wedge \theta} \cap \ker A = \set{0}$ that
\begin{align} \label{eq:singValAndGeomConv}
	\sin(\theta) \sigma_{\min}(A) \leq \sigma_{C \to \R^m}(A) \leq \sin\left( \theta \right) \sigma_{\max}(A).
\end{align}
\end{lem}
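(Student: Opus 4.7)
The plan is to prove the two inequalities in \eqref{eq:singValAndGeomConv} directly, from which the equivalence of (1) and (2) follows immediately. The lower bound holds for every valid $\theta$ and yields the implication (1) $\Rightarrow$ (2). The upper bound, interpreted for the supremum $\theta^* = \sup\set{\theta \geq 0 \ \vert \ C^{\wedge \theta} \cap \ker A = \set{0}}$, yields (2) $\Rightarrow$ (1), since a positive value of $\sigma_{C \to \R^m}(A)$ will force $\sin(\theta^*) > 0$ and hence produce some valid positive $\theta$.

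For the lower bound, fix $\theta > 0$ with $C^{\wedge \theta} \cap \ker A = \set{0}$ and take any $x \in C$ with $\norm{x} = 1$. Orthogonally decompose $x = x_\parallel + x_\perp$ with $x_\parallel \in \ker A$ and $x_\perp \in (\ker A)^\perp$. The crucial step is to use $y = x$ itself as the witness in the definition of $C^{\wedge \theta}$ applied to $x_\parallel$: since $\sprod{x_\parallel, x} = \norm{x_\parallel}^2$, the inclusion $x_\parallel \in C^{\wedge \theta}$ would follow as soon as $\norm{x_\parallel}^2 \geq \cos(\theta)\,\norm{x_\parallel}\,\norm{x} = \cos(\theta)\,\norm{x_\parallel}$, that is, $\norm{x_\parallel} \geq \cos(\theta)$. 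Since $x_\parallel \in \ker A$ and the intersection is trivial, this forces $\norm{x_\parallel} < \cos(\theta)$ (the case $x_\parallel = 0$ being immediate). Pythagoras then yields $\norm{x_\perp} \geq \sin(\theta)$, and since $A$ restricted to $(\ker A)^\perp$ has smallest non-vanishing singular value $\sigma_{\min}(A)$, one obtains $\norm{Ax} = \norm{A x_\perp} \geq \sigma_{\min}(A)\,\sin(\theta)$. Taking the infimum over such $x$ yields the lower bound.

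For the upper bound, the idea is to exhibit an element of $\overline{C}$ whose angle to $\ker A$ is at most $\theta^*$ and then to bound its image via $\sigma_{\max}(A)$. By definition of $\theta^*$, for every $\theta > \theta^*$ the intersection $C^{\wedge \theta} \cap \ker A$ contains a non-zero element, which in turn produces unit vectors $x_\theta \in C$ and $y_\theta \in \ker A$ with $\sprod{x_\theta, y_\theta} \geq \cos(\theta)$. Compactness of the unit sphere then lets us extract a subsequential limit $x^* \in \overline{C}$ and $y^* \in \ker A$, both of unit norm, with $\sprod{x^*, y^*} \geq \cos(\theta^*)$. Consequently $\norm{\Pi_{\ker A}(x^*)} \geq \cos(\theta^*)$, so $\norm{\Pi_{(\ker A)^\perp}(x^*)} \leq \sin(\theta^*)$, from which $\norm{A x^*} \leq \sigma_{\max}(A)\,\sin(\theta^*)$ follows. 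Since $\norm{A \cdot}$ is continuous, the infimum defining $\sigma_{C \to \R^m}(A)$ extends from $C$ to $\overline{C}$, so $\sigma_{C \to \R^m}(A) \leq \norm{A x^*}$, producing the upper bound.

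The main obstacle is the clever witness choice $y = x$ in the lower bound: this is precisely what couples the angular separation condition to the size of the component of $x$ along $\ker A$, and it is not an obvious move a priori. A secondary technicality concerns the case when $C$ is not closed: the compactness argument in the upper bound must then be carried out in $\overline{C}$, which causes no harm since continuity of $A$ ensures that $\sigma_{C \to \R^m}(A)$ is unaffected by replacing $C$ by its closure.
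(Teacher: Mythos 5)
Your proof is correct and follows essentially the same route as the paper's: your lower bound is the paper's completing-the-square argument repackaged via the orthogonal decomposition $x = x_\parallel + x_\perp$ (the witness $y=x$ for $x_\parallel\in C^{\wedge\theta}$ is exactly the paper's observation that $\sprod{x,y}\leq\cos(\theta)\norm{y}$ for all $y\in\ker A$, specialized to $y=x_\parallel$), and your upper bound amounts to the paper's choice $t=\cos(\theta)$ in $\min_t\norm{x-ty}$, i.e., projecting $x$ onto $\spn\set{y}$. Your compactness step is harmless but avoidable (one can let $\theta\downarrow\theta^*$ in $\sigma_{C\to\R^m}(A)\leq\sigma_{\max}(A)\sin(\theta)$ directly), and your reading of the upper bound as a statement about $\theta^*=\sup\set{\theta \,:\, C^{\wedge\theta}\cap\ker A=\set{0}}$ is the right one, since the literal ``for every $\theta$'' phrasing can only apply to the lower bound.
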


\begin{proof}
	$(1) \Rightarrow (2)$. Suppose that $C^{\wedge \theta} \cap \ker A = \set{0}$ and let $x \in C$ have unit norm. Then we have for every $y \in \ker A$
	\begin{align*}
		\norm{x-y}_2^2 = 1 + \norm{y}^2 - 2 \sprod{x,y} \geq 1+ \norm{y}^2-2\cos(\theta)\norm{y} = 1 - \cos^2(\theta) + ( \norm{y}-\cos(\theta))^2 \geq \sin^2(\theta),
	\end{align*}
	since due to $C^{\wedge \theta} \cap \ker A = \set{0}$, there must be $\sprod{x,y} \leq \cos(\theta)\norm{y}$. Since $y \in \ker A$ was arbitrary, we obtain $\norm{x + \ker A} \geq \sin(\theta)$. This has the consequence
	\begin{align*}
		\norm{Ax} \geq \sigma_{\min}(A) \norm{x + \ker A} \geq \sigma_{\min}(A)\sin(\theta).
	\end{align*}
	It follows that $\sigma_{C \to \R^m}(A) \geq \sigma_{\min}(A)\sin(\theta)>0$, since $\sigma_{\min}(A)>0$ due to $\ker A \neq \calH$, which follows from $ C^{\wedge \theta} \cap \ker A = \set{0}$, and $\theta>0$.
	
	$(2) \Rightarrow (1)$ Suppose that $\sigma_{C \to \R^m}(A)>0$ and let $x \in C$ and $y \in \ker A$ have unit norm. Define $\theta$ through $\sprod{x,y} = \cos(\theta)$. Our goal is to prove that $\theta$ has to be larger than some number $\theta_0>0$. 
	
	Since $x \in C$, we have $\norm{Ax}_2 \geq \sigma_{C \to \R^m}(A)$. We also have for every $t \in \R$, due to $y \in \ker A$,
	\begin{align*}
		\sigma_{C \to \R^m}(A)\leq\norm{Ax}_2 = \norm{A(x-ty)}_2 \leq \sigma_{\max}(A) \norm{x-ty}_2 = \sigma_{\max}(A) \sqrt{1+t^2 -2t\cos(\theta)}.
	\end{align*}
	Choosing $t =\cos(\theta)$ yields $$\sin\left(\theta \right)=\sqrt{1-\cos^2(\theta)}\geq \frac{\sigma_{C \to \R^m}(A)}{\sigma_{\max}(A)}>0 ,$$ where we in the last step used $\sigma_{C \to \R^m}(A)>0$. This proves the claim.
\end{proof}

Now Proposition \ref{lem:geomConvStab} easily follows from combining Lemma \ref{lem:singValMeansStab} with Lemma \ref{lem:singValAndGeomConv}. We will return to the connection between restricted singular values and the $ASC$ in the next section.

\begin{rem} \label{rem:stableL2NoAngle}

	 The $ASC$ is not necessary for robust recovery. Consider for example $\ell_2$-minimization in $\calH = \R^d$: 
	\begin{align} \tag{$\mathcal{P}_2^\epsilon$}
		\min \norm{x}_2 \text{ subject to } \norm{Ax -b}\leq \epsilon.
	\end{align}
	In order for $\ell_2$-minimization to exactly recover a signal $x_0$ (which of course is necessary for robust recovery, choose $\epsilon=0)$, we need to have $x_0 \perp \ker A$, since the solution of $(\calP_2)$ is given by $\Pi_{\ker A^\perp} x_0$. Furthermore, $\mathcal{D}(\norm{\cdot}_2, x_0)$ is given by $\set{ v \in \R^d \ \vert \ \sprod{x_0,v} < 0}$. We claim that this implies that for each $\theta>0$, $\mathcal{D}^{\wedge \theta}(\norm{\cdot}_2, x_0) \cap \ker A \neq \set{0}$. 
	
	To see why, let $v \in \mathcal{D}(\norm{\cdot}_2,x_0)$ and $ \eta$ be a nonzero element of $\ker A$. Such an element necessarily exists as soon as $d>m$. For any $\lambda >0$, $v + \lambda \eta \in \mathcal{D}(\norm{\cdot}_2, x_0)$. This since we  have $\sprod{x_0,\eta}=0$ due to $x_0 \perp  \ker A \ni \eta$.  Consequently, $\sprod{v + \lambda \eta, x_0} = \sprod{v,x_0}<0$, i.e. $v + \lambda \eta \in \mathcal{D}(\norm{\cdot}_2, x_0)$. 
	
	Now consider the quotient
	\begin{align*}
		\frac{\sprod{v+ \lambda \eta, \eta} }{\norm{v+\lambda \eta}_2\norm{\eta}_2} \geq \frac{\sprod{v, \eta}+ \lambda\norm{ \eta}_2^2 }{(\norm{v}_2+\lambda \norm{\eta}_2)\norm{\eta}_2}.
\end{align*}	
By letting $\lambda \to \infty$, this quotient can be made arbitrarily close to $1$. Since $v + \lambda \eta \in \mathcal{D}(\norm{\cdot}_2, x_0)$, this means that $\ker A \ni \eta~\in~\mathcal{D}^{\wedge \theta}(\norm{\cdot}_2, x_0)$ for every $\theta >0$. Hence, we have a non-trivial intersection  between the $\theta$-expansion of the descent cone and the kernel for every $\theta > 0$.

	 It is, however, not hard to convince oneself that the solution $\hat{x}$ of $(\calP_2^\epsilon)$ necessarily lies in $\ker A^\perp$ (any part in $\ker A$ can be removed without affecting $\norm{Ax -b}_2$, and at the same time making $\norm{x}_2$ smaller). We already argued that $x_0$ also has this property. This has the immediate consequence that
	\begin{align*}
		\norm{\hat{x}-x_0}_2 \leq \sigma_{\min}(A)^{-1} \norm{A\hat{x} - A x_0}_2 \leq 2 \sigma_{\min}(A)^{-1} \epsilon,
	\end{align*}
	i.e. we have robustness.
	
\end{rem}

Now we prove that under the assumption that $f$ is a norm on $\calH$, the $ASC$ in fact also implies stability. 

\begin{theo} \label{prop:robustCrit}
	Let $\norm{\cdot}_*$ be a norm on $\calH$ and consider the convex program
	\begin{align} \label{eq:normProg}
		\min \norm{x}_* \text{ subject to } \norm{Ax-b} \leq \epsilon,
	\end{align}
	where $b=A\check{x}_0 +n$ with $\norm{n} \leq \epsilon$. Suppose that there exists a $\theta>0$ so that $A$ fulfills the $\theta$-$ASC$ for every $x_0$ in some subset $\calC \sse \calH$.
	Then the following is true for \eqref{eq:normProg}: There exist constants $\kappa_1$ and $\kappa_2$ such that any solution $x^*$ fulfills
	\begin{align} \label{eq:robustEq}
		\norm{x^*- \check{x}_0} \leq \kappa_1 \epsilon + \kappa_2 \dist_{\norm{\cdot}_*}(\check{x}_0, \cone(\calC)) .
	\end{align}
	Here, $\cone(\calC)$ denotes the cone generated by $\calC$, i.e., the set $\set{\lambda x_0, \lambda >0 \text{ and } x_0 \in \calC}$. The first constant $\kappa_1$ depends on  $\sigma_{\min}(A)$,  $\norm{\cdot}_*$ and $\theta$. The second constant $\kappa_2$ depends on $\theta$, $\norm{\cdot}_*$ and the condition number $\sigma_{\max}(A)/\sigma_{\min}(A)$ of $A$.
\end{theo}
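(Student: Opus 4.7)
The plan is to reduce the stability question for a general $\check{x}_0$ to the robustness result already established for a nearby point of $\cone(\calC)$. Because $\norm{\cdot}_*$ is positively homogeneous, $\calD(\norm{\cdot}_*, \lambda x) = \calD(\norm{\cdot}_*, x)$ for every $\lambda>0$, and so the hypothesised $\theta$-$ASC$ at every element of $\calC$ automatically extends to every element of $\cone(\calC)$. Given $\delta > 0$, I pick $x_0 \in \cone(\calC) \setminus \set{0}$ realising $r := \norm{x_0 - \check{x}_0}_* \leq \dist_{\norm{\cdot}_*}(\check{x}_0, \cone(\calC)) + \delta$. Applying Lemma~\ref{lem:singValAndGeomConv} at $x_0$ then yields $\sigma_{\calD(\norm{\cdot}_*, x_0) \to \R^m}(A) \geq \sin(\theta)\,\sigma_{\min}(A) > 0$.

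Setting $h = x^* - x_0$, I would collect two estimates. On the measurement side, $\norm{Ax^* - b}\leq \epsilon$ together with $b = A\check{x}_0 + n$, $\norm{n}\leq \epsilon$, and $\norm{A(x_0-\check{x}_0)} \leq \sigma_{\max}(A)\norm{x_0-\check{x}_0}$ give, by the triangle inequality, $\norm{Ah} \leq 2\epsilon + \sigma_{\max}(A) C_* r$, where $C_*$ is the (finite, norm-dependent) equivalence constant $\sup_{z\neq 0}\norm{z}/\norm{z}_*$. On the objective side, $\check{x}_0$ is feasible for \eqref{eq:normProg}, so $\norm{x^*}_* \leq \norm{\check{x}_0}_*$ and the triangle inequality in $\norm{\cdot}_*$ yields the ``near descent'' relation $\norm{x_0 + h}_* \leq \norm{x_0}_* + r$.

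The crucial step is to exhibit a genuine descent direction close to $h$. The vector $-x_0$ is a ``perfect'' descent direction, since $\norm{x_0 + (-x_0)}_* = 0$. By convexity of $\norm{\cdot}_*$, the convex combination $v_t := (1-t)h - t x_0$ satisfies $\norm{x_0 + v_t}_* \leq (1-t)(\norm{x_0}_* + r)$, which is at most $\norm{x_0}_*$ as soon as $t \geq r/(r+\norm{x_0}_*)$. Picking this minimal $t$ places $v_t \in \calD(\norm{\cdot}_*, x_0)$, whereupon the restricted singular value bound furnishes $\norm{v_t} \leq (\sin(\theta)\,\sigma_{\min}(A))^{-1}\norm{A v_t}$. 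Writing $h = (1-t)^{-1}(v_t + t x_0)$, using $\norm{A v_t} \leq (1-t)\norm{Ah} + t\norm{A x_0}$ and $\norm{A x_0}\leq \sigma_{\max}(A)\norm{x_0}$, and then substituting the bounds from the previous paragraph produces an inequality of the form $\norm{h} \leq C_1 \epsilon + C_2 r$ with constants $C_1, C_2$ depending only on $\theta$, $\sigma_{\min}(A)$, $\sigma_{\max}(A)$, and $C_*$. Finally $\norm{x^* - \check{x}_0} \leq \norm{h} + C_* r$ and $\delta \to 0$ deliver \eqref{eq:robustEq} with $\kappa_1$ proportional to $(\sin(\theta)\,\sigma_{\min}(A))^{-1}$ and $\kappa_2$ proportional to $C_*\bigl(1 + \sigma_{\max}(A)/(\sin(\theta)\,\sigma_{\min}(A))\bigr)$.

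The main obstacle is the bookkeeping in the last step: after dividing by $(1-t)$, the factor $t/(1-t) = r/\norm{x_0}_*$ appears, which is \emph{a priori} unbounded (for instance when $\check{x}_0$ is small and $\cone(\calC)$ does not contain $0$). The saving observation is that this factor only ever multiplies $\norm{x_0}$ or $\norm{A x_0}$, and by the very definition of $C_*$ one has $(r/\norm{x_0}_*)\norm{x_0} \leq C_* r$; this makes the bound scale-invariant as required. A secondary nuisance is the degenerate case $0 \in \calC$, which must be handled by choosing $x_0$ away from the origin (the descent cone of a norm at $0$ is trivial, so the $ASC$ at $0$ carries no useful information).
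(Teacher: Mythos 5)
Your proof is correct and follows essentially the same route as the paper's: up to a positive rescaling, your descent direction $v_t=(1-t)h-tx_0$ with $t=r/(r+\norm{x_0}_*)$ is exactly the paper's vector $\tilde h=x^*-(1+\delta)x_0$ (after its normalization $\norm{x_0}_*=1$), and your detour through the restricted singular value is interchangeable, via Lemma~\ref{lem:singValAndGeomConv}, with the paper's direct angular estimate against the nearest element of $\ker A$. The only substantive differences are minor refinements on your side (taking a near-minimizer, since the infimum over $\cone(\calC)$ need not be attained, and explicitly flagging the degenerate choice $x_0=0$).
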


\begin{proof} (See Fig. \ref{fig:rob} for a graphical depiction of the proof.)
Let $x_0$ be a, not necessarily unique, vector in $\cone(\calC)$ with $\norm{\check{x}_0-x_0}_{*} = \delta:= \dist_{\norm{\cdot}_{*}}(\check{x}_0, \cone(\calC))$. Due to the homogenity of $\norm{\cdot}_{*}$, we have $\calD(\norm{\cdot}_*,x_0) = \calD(\norm{\cdot}_*,  \lambda x_0)$ for every $\lambda >0$. Therefore we may without loss of generality scale the problem such that $\norm{x_0}_*=1$. Also, since all norms on the finite dimensional space $\calH$ are equivalent, there exists $\gamma >0$ so that for each $x\in \calH$, $\gamma^{-1} \norm{x}_{*} \leq \norm{x} \leq \gamma \norm{x}_{*}$. These two facts have the consequence that
\begin{align*}
	\norm{A\check{x}_0 - Ax_0} \leq \sigma_{\max}(A) \norm{\check{x}_0-x_0}\leq \gamma \sigma_{\max}(A) \delta.
\end{align*}
Since $\norm{Ax^*-A\check{x}_0} \leq \norm{Ax^*-b} + \norm{b-A\check{x}_0} \leq 2 \epsilon$, this implies that
\begin{align*}
	\norm{Ax^* - Ax_0}\leq \norm{Ax^* - A \check{x}_0} + \norm{A \check{x}_0 - Ax_0} \leq 2\epsilon + \gamma \sigma_{\max}(A) \delta,
\end{align*}
i.e. $\norm{x_0 - \hat{x} + \ker A} \leq \sigma_{\min}(A)^{-1}(2\epsilon + \gamma \sigma_{\max}(A) \delta)$. Let $h$ be the vector in $\ker A$ so that $\norm{x_0 + h - \hat{x}} = \norm{x_0 - \hat{x} + \ker A}$.

\begin{figure}
\centering
\includegraphics[scale=.3]{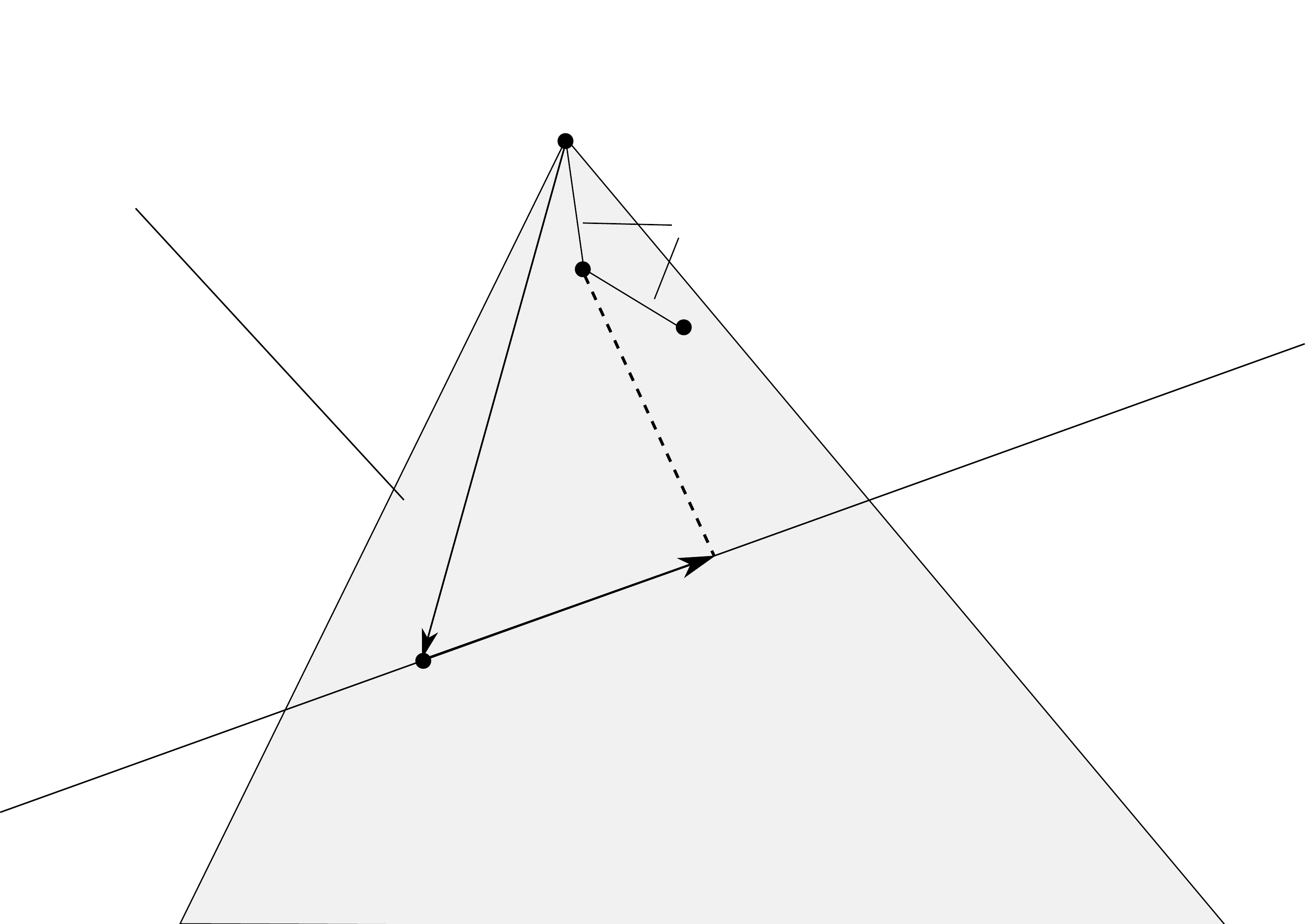}
	\caption{The proof of Proposition \ref{prop:robustCrit}. \label{fig:rob}}
\end{figure}

Now since $x^*$ is a solution of \eqref{eq:normProg}, there must be $\norm{x^*}_*\leq \norm{\check{x}_0}_* \leq \norm{x_0}_* +\delta= \norm{(1+\delta)x_0}_*$, i.e. $\tilde{h} := x^* - (1+\delta)x_0 \in \mathcal{D}(\norm{\cdot}_*,(1+\delta) x_0)=\mathcal{D}(\norm{\cdot}_*,x_0)$, where the latter is due to the homogeneity of $\norm{\cdot}_*$.

Now we have
\begin{align*}
	\norm{x^*-x_0} \leq \norm{x^* - (1+\delta)x_0} + \norm{\delta x_0} \leq \Vert \tilde{h}\Vert + \gamma \delta.
\end{align*}
Due to $h \in \ker A$ and $\tilde{h} \in \mathcal{D}(\norm{\cdot},x_0)$, \eqref{eq:geomConvStab} implies that $\langle h, \tilde{h} \rangle \leq \cos(\theta)  \norm{h}\Vert{\tilde{h}}\Vert$. This has the consequence that
\begin{align*}
	\Vert h - \tilde{h} \Vert^2 \geq \norm{h}^2 + \Vert \tilde{h} \Vert^2 - 2 \cos(\theta) \norm{h} \Vert \tilde{h} \Vert = ( \norm{h} - \cos(\theta) \Vert \tilde{h} \Vert )^2 + (1 - \cos^2(\theta)) \Vert \tilde{h} \Vert^2 \geq \sin^2(\theta) \Vert \tilde{h} \Vert^2,
\end{align*}
 which implies that $\Vert \tilde{h}\Vert_2 \leq \sin^{-1}(\theta)\Vert h - \tilde{h}\Vert$. Since
\begin{align*}
	\Vert h-\tilde{h} \Vert = \norm{h- x^* + (1+\delta)x_0} \leq \norm{x_0 + h - x^*} + \delta \norm{x_0} \leq \sigma_{\min}(A)^{-1}(2\epsilon + \gamma \sigma_{\max}(A) \delta) + \gamma \delta,
\end{align*}
we have $\Vert \tilde{h}\Vert \leq \left(\sin(\theta)\sigma_{\min}(A)\right)^{-1}( 2\epsilon + \gamma \sigma_{\max}(A) \delta) + \sin^{-1}(\theta)\gamma \delta $. Finally, we estimate
\begin{align*}
	\norm{x^*-\check{x}_0}_* &\leq \norm{x^* - x_0}_* + \norm{x_0 - \check{x}_0}_* \leq \gamma \norm{x^*-x_0} + \delta \leq \gamma(\Vert \tilde{h}\Vert + \gamma \delta) + \delta \\
	&\leq \sin^{-1}(\theta) \left(\gamma\sigma_{\min}(A)^{-1}( 2\epsilon + \gamma \sigma_{\max}(A) \delta) +  \delta \right) + 2 \delta =: \kappa_1 \epsilon + \kappa_2 \dist_{\norm{\cdot}}(\check{x}_0, \cone (\calC) ),
\end{align*}
where $\kappa_1 =2\gamma (\sin(\theta)\sigma_{\min}(A))^{-1}$ and $\kappa_2 = \sin\theta^{-1}(\gamma^2\sigma_{\max}(A)/\sigma_{\min}(A) +1)+2$, which is what we wanted to prove.
\end{proof}

\subsection{$ASC$ and Two Other Geometrical Notions of Stability.} \label{sec:ASCvsConds}
Let us end this section by briefly discussing the connection between the $ASC$-condition and two other measures for stability of a linear embedding, namely the so-called \emph{Renegar's condition number}\cite{BelloniFreund2007,renegar1992} $\calC_\calR(A)$ and the \emph{Grassmannian condition number} \cite{amelunxenBurgisser2012}  $\calC(A)$ of a matrix. They were originally introduced to study the stability of the \emph{homogeneous convex feasibility problem}: Given a closed convex cone $K \sse \R^m$ with non-empty interior not containing a subspace (a \emph{regular} cone), for which  $A \in \R^{m,d}$ does there exist a $z$ with
\begin{align*}
	Az \in \text{int } K ?
\end{align*}
Let us call matrices such \emph{feasible}. The connection to our problem follows from duality: given a cone $C$ whose polar $C^*= \set{ x \in \R^d \ \vert \ \forall y \in C : \sprod{x,y}\leq 0 } $ is regular,  it is well known that if the range of the transposed matrix $A^*$ intersects the $\text{int } C^*$ , the kernel of a matrix $A$ can't intersect $C$ non-trivially (see for instance \cite{BelloniFreund2007}).

 Since the range of $A^*$ always is equal to the orthogonal complement $\ker A^\perp$ of $\ker A$, it makes sense to define the following sets of subspaces :
\begin{align*}
	P_m(K) = \set{U \in \gr(d,m) \ \vert \ U^\perp \cap K^* \neq \set{0}}, D_m(C) = \set{U \in \gr(d,m) \ \vert \ U \cap K \neq \set{0}}
\end{align*}
$\gr(d,m)$ denotes the \emph{Grassmannian manifold} of $m$-dimensional subspaces of $\R^d$. It can be proven that both $P_m(K)$ and $D_m(K)$ are closed in $\gr(d,m)$ and that they share a common boundary $\Sigma_m(K)$. The Grassmannian condition number of a matrix $A$ with respect to a regular cone $C$ is defined as the inverse of the distance of $\ker A$ to $\Sigma_m(C)$, i.e.
\begin{align*}
	\calC(A) =  \frac{1}{\dist(\ker A, \Sigma_m(C))}.
\end{align*}
The distance is thereby calculated with respect to the canonical metric on $\gr(d,m)$: if $U$ and $V$ are $m$-dimensional subspaces of $\R^d$ and $\Pi_U$ and $\Pi_V$ are the orthogonal projections onto them, we define
\begin{align*}
	d(U,V)= \norm{\Pi_U - \Pi_V}.
\end{align*}
Here, $\norm{\cdot}$ denotes the operator norm.

$\calC(A)$ is very closely related to the $ASC$-condition: if $A$ is feasible, the largest angle $\theta^*$ so that $A$ satisfies the $\theta$-$ASC$ satisfies \cite[Proposition 1.6]{amelunxenBurgisser2012} 
\begin{align} \label{eq:grassmannCond}
	\sin(\theta^*) = \frac{1}{\calC(A)}.
\end{align}

The Grassmannian condition number is itself closely related to the so-called Renegar's condition number $\calC_\calR(A)$. Given a feasible matrix $A$ (in the same sense as above), it is defined as the distance from $A$ to the set of infeasible matrices, i.e.
\begin{align*}
	\calC_\calR(A) = \min \set{ \norm{\Delta A} \ \vert \ A + \Delta A \text{ is infeasible} }.
\end{align*}
In our setting, it can be proven that \cite[Lemma 2.2]{Roulet2015Renegar}
\begin{align} \label{eq:renegar}
	\calC_\calR(A) = \frac{\sigma_{\max}(A)}{\sigma_{C \to \R^m}(A)}.
\end{align}

The article \cite{Roulet2015Renegar} contains some more details and interesting results concerning the connection between $\calC_\calR(A)$ and the robustness properties of compressed sensing problems. In particular, they prove the following version of Lemma \ref{lem:singValMeansStab} of this article: if we assume that the noise level is below $\norm{A} \epsilon$ (which in particular is interesting when the measurement matrix $A$ only is known up to some error $\Delta A$), every solution $x^*$ of the program $\calP_f^\epsilon$ obeys
\begin{align*}
	\norm{x^*-x_0}_2 \leq 2 \calC_\calR(A) \epsilon.
\end{align*}

Let us end this section by noting that one can prove Lemma \ref{lem:singValAndGeomConv} by using the following inequality from \cite[Theorem 1.4]{amelunxenBurgisser2012}   :
\begin{align*}
	\calC(A) \leq \calC_\calR(A) \leq \frac{\sigma_{\max}(A)}{\sigma_{\min}(A)}\calC(A).
\end{align*}
Using \eqref{eq:grassmannCond} and \eqref{eq:renegar}, this can be rewritten as
\begin{align*}
	\sin(\theta^*)\sigma_{\min}(A)  \leq \sigma_{C \to \R^m}(A) \leq \sin(\theta^*) \sigma_{\max}(A),
\end{align*}
which is exactly the inequality \eqref{eq:singValAndGeomConv}.

\section{When is the $ASC$ satisfied?}

Having established that the $ASC$ implies stability and robustness for signal recovery using the convex program \eqref{eq:normProg}, it is of course interesting to ask for which matrices this condition is satisfied. In this section, we will first prove the maybe somewhat remarkable fact that, for many reasonable norms, the weak $NSP$-like condition \eqref{eq:geomExactCond} in fact \emph{implies} that the $ASC$ is satisfied for some $\theta>0$.

However, the above reasoning only yields the \emph{existence} of a $\theta>0$ with \eqref{eq:geomConvStab}, and does not give any control of the size of $\theta$. Therefore, we will also briefly discuss the relation between already known stability conditions for compressed sensing, and that the $ASC$ can be secured with high probability using random Gaussian matrices. Using the concept of $\emph{Gaussian widths}$, we will argue that if one needs $m_0$ measurements to secure that $(\calP_f)$ recovers a signal $x_0$ with high probability from noiseless measurements, we need $m_0 + O\left(\sin\left(\frac{\theta}{2}\right)d\right)$ to secure the $ASC$ for $\theta>0$.

\subsection{Exact Recovery Implies Some Stability and Some Robustness.} \label{sec:ExactImpliesStable}

The first result of this subsection was essentially already proven in \cite{amelunxen2014gordon}. Let us state it, and for completeness also give a proof, and then discuss its implications and limitations.

\begin{theo} Let $C \sse \calH$ be a \emph{closed} convex cone, and $A: \calH \to \R^m$ be linear. Then if $ C \cap \ker A= \set{0}$, there exists a $\theta>0$ such that $C^{\wedge \theta} \cap \ker A = \set{0}$, i.e., the $\theta$-$ASC$ holds.
\end{theo}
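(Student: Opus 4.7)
The plan is a straightforward compactness argument on the unit sphere $\sph$ in $\calH$. The point is that the condition $C \cap \ker A = \set{0}$ can be upgraded from an intersection statement to a quantitative angular separation statement, because both cones intersect $\sph$ in compact sets on which the inner product is a continuous function.

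First I would dispose of the degenerate cases: if $\ker A = \set{0}$ there is nothing to prove (the intersection with $\ker A$ is $\set{0}$ for every $\theta$), and similarly if $C = \set{0}$. Otherwise, set $S = C \cap \sph$ and $K = \ker A \cap \sph$. Both are nonempty and, because $C$ is closed and $\ker A$ is a subspace (hence closed), both are closed subsets of the compact sphere $\sph$, so they are compact. The map $(x,y) \mapsto \sprod{x,y}$ is continuous on the compact product $S \times K$, so it attains its supremum at some $(x_*, y_*)$; call the value $c^* := \sprod{x_*, y_*}$.

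The key step is to show $c^* < 1$. If instead $c^* = 1$, then by the equality case of Cauchy--Schwarz combined with $\norm{x_*}=\norm{y_*}=1$, we would have $x_* = y_*$, yielding a unit vector in $C \cap \ker A$, which contradicts the hypothesis $C \cap \ker A = \set{0}$. Hence $c^* \in [-1, 1)$, and we may choose $\theta > 0$ small enough that $\cos(\theta) > c^*$ (for instance any $\theta < \arccos(c^*)$).

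It remains to verify that this $\theta$ witnesses the desired separation. Suppose for contradiction that some nonzero $z \in \ker A \cap C^{\wedge \theta}$. By the definition of the $\theta$-expansion, there exists $y \in C$ with $\sprod{z,y} \geq \cos(\theta) \norm{z}\norm{y}$; in particular $y \neq 0$. Normalising yields $\hat{z} := z/\norm{z} \in K$ and $\hat{y} := y/\norm{y} \in S$ with $\sprod{\hat{z}, \hat{y}} \geq \cos(\theta) > c^*$, contradicting the maximality of $c^*$. Therefore $C^{\wedge \theta} \cap \ker A = \set{0}$, which is the $\theta$-$ASC$. The main obstacle is essentially just the case analysis for the degenerate situations and the clean application of the Cauchy--Schwarz equality case; everything else is a routine continuity/compactness argument.
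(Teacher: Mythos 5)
Your proof is correct, but it takes a genuinely different route from the paper's. The paper deduces the theorem from the restricted-singular-value machinery: it cites \cite{amelunxen2014gordon} for the fact that, for a closed cone $C$ and $D=\R^m$ (so $D^*=\set{0}$), $\sigma_{C \to \R^m}(A)>0$ if and only if $C \cap \ker A = \set{0}$, and then invokes Lemma \ref{lem:singValAndGeomConv}, which identifies positivity of this restricted singular value with the existence of a $\theta>0$ satisfying the $\theta$-$ASC$. You instead argue directly: compactness of $C \cap \sph$ and $\ker A \cap \sph$ together with continuity of the inner product produces a maximal correlation $c^*$, the equality case of Cauchy--Schwarz rules out $c^*=1$, and any $\theta < \arccos(c^*)$ works. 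Your argument is self-contained and more elementary (it needs no external result and no singular values), and it exhibits the optimal $\theta$ as the angular distance between the two cones on the sphere --- exactly the quantity $\theta^*$ that the paper later ties to the Grassmannian condition number via $\sin(\theta^*) = 1/\calC(A)$; what the paper's route buys is that the same two lemmas also deliver the quantitative sandwich $\sin(\theta)\sigma_{\min}(A) \leq \sigma_{C \to \R^m}(A) \leq \sin(\theta)\sigma_{\max}(A)$, whereas your argument gives existence without a norm-based bound. One small caveat: your claim that the witness $y \in C$ in the final step is nonzero does not follow from Definition \ref{def:muExp} as literally written, since $y=0 \in C$ always satisfies $\sprod{z,y} \geq \cos(\theta)\norm{z}\norm{y}$ trivially (which would make $C^{\wedge\theta}$ all of $\calH$); it does follow from the equivalent characterization in terms of unit vectors given in the remark after that definition, which is clearly the intended reading, so this is a definitional artifact shared with the paper rather than a gap in your argument --- and the same remark is what justifies dismissing your degenerate case $C = \set{0}$.
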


\begin{proof}
Under the assumption that $C$ and $D$ are closed, the restricted singular value $\sigma_{C \to D}(A)$ vanishes if and only if either $A C \cap D^* \neq \set{0}$ or $C \cap \ker A \neq \set{0}$ \cite[Proposition 2.2]{amelunxen2014gordon}. Since $D=\R^m$ in our case, $D^* = \set{0}$, and we hence by contraposition have the equivalence
\begin{align*}
	\sigma_{C \to \R^m}(A) >0 \Leftrightarrow C \cap \ker A = 0.
\end{align*} 
Since by Lemma \ref{lem:singValAndGeomConv}, $\sigma_{C \to \R^m}(A) >0$ is equivalent to the existence of a $\theta>0$  such that $C^{\wedge \theta} \cap \ker A = \set{0}$, the claim is proven.
\end{proof}

On a theoretical level, the last proposition implies that as soon as the recovery of some class of signals $\calC$ from exact measurements with the help of a convex program is guaranteed, we also have stability and robustness for the recovery of signals close to $\calC$ from noisy measurements. As simple and beautiful the result is, it has its flaws. In particular, we have no control whatsoever over the size of the parameter $\theta$, which in turn implies that we have no control over the constants in \eqref{eq:robustEq}. 

In the case that the norm $\norm{\cdot}_*$ has a unit ball which is a polytope, we can do a bit better. Although we still cannot provide any general bound on the size of $\theta$, we can prove that it will have the same size for all points lying in the same face of the unit ball. 

Before stating and proving the result, let us note that the assumption that the unit ball of $\norm{\cdot}_*$ is a polytope is not far-fetched. In particular, it is true for both $\ell_1$-minimization (and its many variants, i.e., also for weighted norms etc.), and for $\ell_\infty$-minimization -- or in general any atomic norm generated by a finite set of atoms $\calA$. Let us now formulate the main part of the argument in the following lemma.

\begin{lem} \label{lem:boundingAngle}
	 Let $P \sse \calH$ be a closed polytope and $\calC \sse P$ be a union of faces of $P$. Suppose that the linear subspace $U \sse \calH$ has the property that for each $x_0 \in \cal C$, $x_0 + U$ intersects $P$ only in $x_0$. Then for each $x_0 \in \calC$,  there exists a $\mu<1$ such that
	 \begin{align*}
	 	\forall x \in P, z \in U : \sprod{x -x_0 , z} \leq \mu \norm{x-x_0}_2\norm{z}_2.
	 \end{align*}
	The size of $\mu$ is only dependent on which face $x_0$ lies in.	
\end{lem}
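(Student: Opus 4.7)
The plan is to reduce the statement to the closed-cone theorem at the start of Section \ref{sec:ExactImpliesStable}, applied to an appropriate cone built out of $P$. I would set $K := \cone(P - x_0)$. Since $P$ is a polytope, $P - x_0$ is the convex hull of the finitely many translated vertices of $P$, so $K$ is a polyhedral -- in particular closed and convex -- cone. Crucially, $x - x_0 \in K$ for every $x \in P$, so an angular separation between $K$ and $U$ will immediately give the desired bilinear inequality.

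The first concrete step is to verify $K \cap U = \{0\}$. If some nonzero $v$ lay in $K \cap U$, one could write $v = \lambda(p - x_0)$ with $\lambda > 0$ and $p \in P$; since $U$ is a linear subspace, $p - x_0 = v/\lambda \in U$, so $p \in (x_0 + U) \cap P \setminus \{x_0\}$, contradicting the hypothesis. The closed-cone theorem (equivalently, Lemma \ref{lem:singValAndGeomConv} applied to any linear map with kernel $U$) then produces a $\theta > 0$ such that $K^{\wedge \theta} \cap U = \{0\}$. Unfolding Definition \ref{def:muExp} with $y = x - x_0 \in K$ yields $\sprod{x - x_0, z} < \cos(\theta) \norm{x - x_0}_2 \norm{z}_2$ for every nonzero $z \in U$ and every $x \in P \setminus \{x_0\}$, and the trivial cases $x = x_0$ or $z = 0$ give equality $0 = 0$. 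Setting $\mu := \cos(\theta) < 1$ gives the claimed bound.

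Finally, the face-dependence statement follows because $K = \cone(P - x_0)$ depends only on the minimal face $F$ of $P$ that contains $x_0$ in its relative interior: for any other $x_0' \in \relint(F)$ one checks $\cone(P - x_0') = \cone(P - x_0)$, since translating by a vector in $\aff(F) - x_0$ leaves unchanged the set of directions along which one can step into $P$. As $\theta$ is determined purely by the pair $(K, U)$ and $U$ is fixed, $\mu$ depends only on the face $F$. The only mild subtlety I foresee is choosing the right cone to feed into the earlier theorem; once $K = \cone(P - x_0)$ is identified, the existing machinery carries the remainder of the argument and the polytope hypothesis enters only through the closedness of $K$.
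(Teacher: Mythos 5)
Your proof is correct, but it takes a genuinely different route from the paper's. You reduce the lemma to the closed-cone theorem at the start of Section \ref{sec:ExactImpliesStable} by observing that the cone of feasible directions $K=\cone(P-x_0)$ is finitely generated (hence closed and convex by Minkowski--Weyl), that the hypothesis forces $K\cap U=\set{0}$, and that $K$ --- being cut out exactly by the constraints of $P$ that are active at $x_0$ --- is literally the same cone for every $x_0$ in the relative interior of a given face, which yields the face-uniformity of $\mu=\cos(\theta)$ for free. The paper instead gives a self-contained elementary argument: it shows that the supremum defining $\mu(x_0)$ is attained via compactness (the delicate point being to rescale a maximizing sequence $x_n\to x_0$ along the polytope so that it remains in $P$ while staying a definite distance $\delta>0$ away from $x_0$), deduces $\mu(x_0)<1$ from the equality case of the Cauchy--Schwarz inequality, and obtains face-constancy by a local translation argument combined with the connectedness of $\relint F$. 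Your version is shorter and more conceptual, at the price of importing the restricted-singular-value characterization from \cite{amelunxen2014gordon} on which the closed-cone theorem rests; since that theorem is established before Lemma \ref{lem:boundingAngle} is needed, there is no circularity. The only points worth tightening are the verification that every nonzero element of $\cone(P-x_0)$ is of the form $\lambda(p-x_0)$ with $p\in P$ (true because $P-x_0$ is convex) and the claim that $\cone(P-x_0)$ depends only on the minimal face, which is cleanest via active constraint sets; both are standard. In exchange, your argument makes transparent the illuminating fact that for a polytope norm the descent cone itself depends only on the face in whose relative interior $x_0$ lies, whereas the paper's proof has the virtue of remaining entirely elementary.
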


Although the proof of this lemma is elementary, it is relatively long. Therefore, we postpone it to Appendix \ref{app:A}. Instead, we use it to prove the aforementioned result about stability and robustness for recovery using convex programs involving norms with polytope unit balls.

\begin{cor}
Let $A: \calH \to \R^m$ be given. Suppose that $\norm{\cdot}_{*}$ is a norm whose unit ball is a polytope, and $\calC$ be a union of faces of that polytope. If the program $(\calP_{\norm{\cdot}_{*}})$ recovers $x_0$ from the noiseless measurements $Ax_0$ for every $x_0 \in \calC$,  all signals $\check{x}_0$ close to the cone generated by $\calC$ will be stably and robustly recovered by $(\calP_{\norm{\cdot}_{\ast}}^{\epsilon})$ in the sense of \eqref{eq:robustEq}. The constants $\kappa_1$ and $\kappa_2$ will only depend on which face the normalized version $\check{x}_0/ \norm{\check{x}_0}_*$ of $\check{x}_0$ lies closest to.
\end{cor}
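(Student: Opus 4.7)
The plan is to apply Theorem \ref{prop:robustCrit} one face at a time, exploiting Lemma \ref{lem:boundingAngle} to extract a $\theta$-$ASC$ that is \emph{uniform} over each face of the unit ball $P$ of $\norm{\cdot}_{*}$.

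The first step is to translate the exact recovery hypothesis into the form required by Lemma \ref{lem:boundingAngle}. For any $x_0 \in \partial P$, positive homogeneity of the norm gives $\calD(\norm{\cdot}_{*}, x_0) = \cone(P - x_0)$, so by Lemma \ref{lem:descConeKernel} exact recovery of $x_0$ is equivalent to the statement $(x_0 + \ker A) \cap P = \set{x_0}$. Since descent cones are invariant under positive scaling, exact recovery on all of $\calC$ is equivalent to exact recovery on $\cone(\calC) \setminus \set{0}$, and in particular every $x_0 \in \calC$ must lie on $\partial P$ (for an interior point, $\calD(\norm{\cdot}_{*}, x_0) = \calH$ would intersect $\ker A$ non-trivially). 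The hypothesis of Lemma \ref{lem:boundingAngle} therefore holds with $U = \ker A$.

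The second step is to use the lemma to upgrade exact recovery to the $\theta$-$ASC$. For each face $F$ of $P$ contained in $\calC$, Lemma \ref{lem:boundingAngle} produces a $\mu_F < 1$, common to the whole face, such that $\sprod{x - x_0, z} \leq \mu_F \norm{x - x_0}\norm{z}$ for all $x \in P$, $z \in \ker A$ and $x_0 \in F$. Every $\eta \in \calD(\norm{\cdot}_{*}, x_0) = \cone(P - x_0)$ is a non-negative multiple of some $x - x_0$ with $x \in P$, so rescaling yields $\sprod{\eta, z} \leq \mu_F \norm{\eta}\norm{z}$ for all $\eta$ in the descent cone. By Definition \ref{def:muExp}, this is exactly the statement $\calD^{\wedge \theta}(\norm{\cdot}_{*}, x_0) \cap \ker A = \set{0}$ for every $\theta < \theta_F := \arccos(\mu_F) > 0$. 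The angle $\theta_F$ is uniform over $F$, and homogeneity of the descent cone extends the $\theta_F$-$ASC$ from $F$ to $\cone(F) \setminus \set{0}$.

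Finally, given $\check{x}_0$, let $F$ be the face of $P$ to which $\check{x}_0 / \norm{\check{x}_0}_{*}$ is closest and apply Theorem \ref{prop:robustCrit} with the subset $F$ and angle $\theta_F$. This directly gives $\norm{x^* - \check{x}_0} \leq \kappa_1 \epsilon + \kappa_2 \dist_{\norm{\cdot}_{*}}(\check{x}_0, \cone(F))$, with $\kappa_1, \kappa_2$ depending only on $\theta_F$ and on the fixed data $A, \norm{\cdot}_{*}$, hence only on $F$. Because $F$ is the closest face, the $\norm{\cdot}_{*}$-nearest point to $\check{x}_0$ in $\cone(\calC)$ lies in $\cone(F)$, so $\dist_{\norm{\cdot}_{*}}(\check{x}_0, \cone(F)) = \dist_{\norm{\cdot}_{*}}(\check{x}_0, \cone(\calC))$, which produces \eqref{eq:robustEq}. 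The main obstacle I expect is making this last reduction to the closest face fully rigorous: specifying in which metric "closest" is to be taken and checking that the $\norm{\cdot}_{*}$-projection of $\check{x}_0$ onto $\cone(\calC)$ indeed lies in $\cone(F)$. The remainder is a chain of routine identifications between different formulations of the angular separation condition.
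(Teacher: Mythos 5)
Your proposal is correct and follows essentially the same route as the paper: use Lemma \ref{lem:descConeKernel} to turn exact recovery into $(x_0+\ker A)\cap P=\set{x_0}$, invoke Lemma \ref{lem:boundingAngle} with $U=\ker A$ to obtain a face-uniform $\mu_F<1$ and hence a face-uniform $\theta_F$-$ASC$ for the descent cone $\cone(P-x_0)$, and then apply Theorem \ref{prop:robustCrit}. If anything you are more explicit than the paper about the translation from $\mu_F$ to $\theta_F$ and about the reduction to the closest face, which the paper leaves implicit.
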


\begin{proof}
	Since each $x_0$ in $\calC$ is recovered exactly by $(\calP_{\norm{\cdot}_*})$ by noiseless measurements, we will by Lemma \ref{lem:descConeKernel} have $\calD( \norm{\cdot}_*,x_0)\cap \ker A = \set{0}$ for each $x_0 \in \cal C$. Since the descent cone of $\norm{\cdot}_*$ at $x_0$ is generated by the vectors $x -x_0$, where $x \in P = \set{y \vert \norm{y}_*\leq 1}$, the conditions of Lemma \ref{lem:boundingAngle} are satisfied. Said lemma therefore implies that $\calD^{\wedge \theta}( \norm{\cdot}_*,x_0)\cap \ker A = \set{0}$ for $x_0 \in \calC$, where $\theta>0$ only depends on which face $x_0$ lies in. This together with Theorem \ref{prop:robustCrit} implies the claim.
\end{proof}




\subsection{$ASC$ Compared to Classical Stability and Robustness Conditions in Compressed Sensing.}
In the following, we will relate the $ASC$ to two well-known criteria for stability and robustness of $\ell_1$-minimization from the literature: the $RIP$ and the $RNSP$. We will begin by considering the $RIP$. It is a well-known fact that if the restricted isometry constant $\delta_{2s}$ is small, the program $(\calP_{\norm{\cdot}_1}^\epsilon)$ will recover any $s$-sparse vector in a robust and stable manner. E.g. in \cite[Theorem 6.12]{MathIntroToCS}, it is proved that if $\delta_{2s} < 4/ \sqrt{41}$, \eqref{eq:robustEq} will be satisfied for some constants $\kappa_1$, $\kappa_2$ only dependent on $\delta_{2s}$. Having this in mind, it is of course interesting to ask oneself if it is possible to directly prove that a small $\delta_{2s}$ will imply the $ASC$ for some $\theta>0$. The next proposition gives a positive answer to that question, and it furthermore provides the control of the size of $\theta>0$ we lacked in the previous section.

\begin{prop} \label{prop:RIPASC}
	Suppose that $\delta_s$ and $\delta_{2s}$ of the matrix $A$ satisfies
	\begin{align*}
		\frac{1}{1-\delta_s} \left(\delta_s + \sqrt{5}\sqrt{\frac{d}{s}+1} \frac{\delta_{2s}}{1+\delta_s} \right) \leq \cos(\theta).
	\end{align*}
	Then $\calD^{\wedge \theta}(\norm{\cdot}_1, x_0) \cap \ker A = \set{0}$ for every $s$-sparse $x_0$.
\end{prop}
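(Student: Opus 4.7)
The aim is to show that every non-zero $\eta \in \ker A$ and every $v \in \calD(\norm{\cdot}_1, x_0)$ satisfy $\sprod{\eta, v} < \cos(\theta)\,\norm{\eta}_2 \norm{v}_2$; by the definition of the $\theta$-expansion this is equivalent to the stated $\calD^{\wedge \theta}(\norm{\cdot}_1, x_0) \cap \ker A = \set{0}$. Normalize $\norm{\eta}_2 = \norm{v}_2 = 1$ and let $S = \supp x_0$ with $|S| \leq s$. By Example \ref{ex:l1desc} the descent cone condition on $v$ encodes as $\norm{v_{S^c}}_1 \leq \norm{v_S}_1 \leq \sqrt{s}\,\norm{v_S}_2$. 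Write $\sprod{\eta, v} = \sprod{\eta_S, v_S} + \sprod{\eta_{S^c}, v_{S^c}}$ and partition $S^c$ into $K \leq \lceil (d-s)/s \rceil$ disjoint $s$-blocks $T_1, \dots, T_K$.

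For the on-support term, combine the RIP inner-product inequality $|\sprod{Ax, Ay} - \sprod{x, y}| \leq \delta_s \norm{x}_2 \norm{y}_2$ (for $x, y$ both supported on the $s$-set $S$) with the identity $A\eta_S = -A\eta_{S^c}$ coming from $\eta \in \ker A$: this gives that $\sprod{\eta_S, v_S}$ equals $-\sprod{A\eta_{S^c}, Av_S}$ up to an error of at most $\delta_s \norm{\eta_S}_2 \norm{v_S}_2$. Applying the disjoint-support inequality $|\sprod{Ax, Ay}| \leq \delta_{2s} \norm{x}_2 \norm{y}_2$ block-wise to $(A\eta_{T_k}, Av_S)$ and a Cauchy--Schwarz over $k$ then yields $|\sprod{A\eta_{S^c}, Av_S}| \leq \delta_{2s} \sqrt{K}\, \norm{\eta_{S^c}}_2 \norm{v_S}_2$.

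For the off-support term, apply the RIP inner-product identity to each pair $(\eta_{T_k}, v_{T_k})$ to relate $\sprod{\eta_{S^c}, v_{S^c}} = \sum_k \sprod{\eta_{T_k}, v_{T_k}}$ to $\sprod{A\eta_{S^c}, Av_{S^c}}$, and then use $\sprod{A\eta, Av} = 0$ to rewrite the latter in terms of the three cross-pairings $\sprod{A\eta_S, Av_S}$, $\sprod{A\eta_S, Av_{S^c}}$, $\sprod{A\eta_{S^c}, Av_S}$, together with the sum $\sum_{k \neq l} \sprod{A\eta_{T_k}, Av_{T_l}}$. When added to the on-support part, the $\sprod{A\eta_S, Av_S}$ contributions cancel, leaving only cross-pairings and RIP-error terms. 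Sums involving $v$-blocks are controlled through the sharper cone-derived bound $\sum_l \norm{v_{T_l}}_2 \leq \norm{v_{S^c}}_1 \leq \sqrt{s}\,\norm{v_S}_2$, while sums involving $\eta$-blocks go through Cauchy--Schwarz as $\sum_k \norm{\eta_{T_k}}_2 \leq \sqrt{K}\,\norm{\eta_{S^c}}_2$. The $(1-\delta_s)^{-1}$ and $(1+\delta_s)^{-1}$ factors then come from the two-sided RIP chain $\sqrt{1-\delta_s}\,\norm{\eta_S}_2 \leq \norm{A\eta_S}_2 = \norm{A\eta_{S^c}}_2 \leq \sqrt{(1+\delta_s)K}\,\norm{\eta_{S^c}}_2$, used to reduce all quantities back to $\norm{\eta}_2 = \norm{v}_2 = 1$.

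The delicate step is to produce the precise constant $\sqrt{5}\sqrt{d/s+1}$. A naive Cauchy--Schwarz on $\sum_{k \neq l} \sprod{A\eta_{T_k}, Av_{T_l}}$ yields only a bound of order $\delta_{2s} K \sim \delta_{2s}\,d/s$; replacing this $K$ by $\sqrt{K+1} \leq \sqrt{d/s+1}$ requires systematically invoking the cone $\ell_1$-bound on the $v$-blocks and the $\sprod{A\eta_S, Av_S}$-cancellation mentioned above. The constant $\sqrt{5}$ itself should then appear by packaging the five distinct RIP-error contributions that survive this cancellation into a single $\sqrt{a_1^2 + \cdots + a_5^2}$-type aggregate.
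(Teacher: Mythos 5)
Your overall strategy---pair a kernel vector with a descent-cone vector, split into blocks, and trade $\sprod{\eta_{T_k},v_{T_l}}$ for $\sprod{A\eta_{T_k},Av_{T_l}}$ via the RIP---is in the same family as the paper's argument, but the proposal does not close, and the step you yourself flag as ``delicate'' is precisely where it fails. The concrete problem is your bound on the $v$-block sums: you control $\sum_l \norm{v_{T_l}}_2 \leq \norm{v_{S^c}}_1 \leq \sqrt{s}\,\norm{v_S}_2 \leq \sqrt{s}$, which leaves a factor $\sqrt{s}$. Multiplied against the $\sqrt{K} \approx \sqrt{d/s}$ you get from Cauchy--Schwarz on the $\eta$-blocks, this produces a term of order $\delta_{2s}\sqrt{d}$, not the claimed $\delta_{2s}\sqrt{5}\sqrt{d/s+1}$. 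The missing idea is the one the paper isolates in Lemma \ref{lem:normIneqL1Desc}: the blocks of $S^c$ must be chosen in decreasing order of the magnitudes of the \emph{descent-cone} vector (the monotonicity criterion \eqref{eq:monCrit}), so that $\norm{v_{T_{k+1}}}_2 \leq s^{-1/2}\norm{v_{T_k}}_1$; the $s^{-1/2}$ then cancels the $\sqrt{s}$ coming from the cone inequality $\norm{v_{S^c}}_1 \leq \sqrt{s}\,\norm{v_S}_2$ and yields $\sum_{k\geq 0}\norm{v_{T_k}}_2 \leq \sqrt{5}\,\norm{v}_2$ with \emph{no} dimension dependence. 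Without ordering the blocks, no amount of rearranging the cross-pairings recovers the stated constant. Relatedly, your guess that $\sqrt{5}$ arises from ``packaging five RIP-error contributions'' is not how it appears: in the paper it is $\sqrt{2^2+1^2}$, from applying Cauchy--Schwarz to the vector $(2,1)$ against $\bigl(\norm{v_{S_0}}_2,\norm{v_{S_0^c}}_2\bigr)$ inside Lemma \ref{lem:normIneqL1Desc}.

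A secondary, more structural remark: the paper does not separate on-support and off-support terms and chase cancellations of $\sprod{A\eta_S,Av_S}$. It treats all blocks $S_0,S_1,\dots,S_n$ uniformly, writes $\sprod{u_{S_i},v_{S_i}} = \tfrac14\bigl(\norm{u_{S_i}+v_{S_i}}_2^2 - \norm{u_{S_i}-v_{S_i}}_2^2\bigr)$, applies the two-sided RIP to each square, and only then uses $Av_{S_i} = -\sum_{k\neq i}Av_{S_k}$ together with Lemma \ref{lem:RIPROC}. This avoids the bookkeeping of cross-pairings entirely and is where the clean factorization $\bigl(\sum_i\norm{u_{S_i}}_2\bigr)\bigl(\sum_i\norm{v_{S_i}}_2\bigr)$ comes from. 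If you want to salvage your on/off-support decomposition you would still need to import both the monotone block ordering and the $\sqrt{5}$ lemma, at which point you have essentially reconstructed the paper's proof.
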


For the proof of this claim, we need two lemmata. The first one, we cite from the book \cite{MathIntroToCS}.

\begin{lem} \label{lem:RIPROC} (See \cite[Proposition 6.3]{MathIntroToCS}). Let $u$ and $v$ be $s$-sparse vectors with disjoint support. Then we have
\begin{align*}
	\abs{\sprod{Au,Av}} \leq \delta_{2s}\norm{u}_2 \norm{v}_2,
\end{align*}
where $\delta_{2s}$ is the $(2s)$-th restricted isometry constant of $A$.
\end{lem}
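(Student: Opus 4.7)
\medskip

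\noindent\textbf{Proof proposal.} My plan is the standard polarization argument combined with a normalization step. First I would reduce to the unit-norm case: by rescaling $u \mapsto u/\norm{u}_2$ and $v \mapsto v/\norm{v}_2$ (assuming both are nonzero; otherwise the inequality is trivial) both sides of the desired estimate transform by the same factor $\norm{u}_2\norm{v}_2$, so it suffices to prove
\[
\abs{\sprod{Au,Av}} \leq \delta_{2s}
\]
under the additional assumption $\norm{u}_2 = \norm{v}_2 = 1$.

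Next I would use the parallelogram/polarization identity
\[
4\sprod{Au,Av} = \norm{A(u+v)}_2^2 - \norm{A(u-v)}_2^2.
\]
The key observation is that because $u$ and $v$ have disjoint supports, both $u+v$ and $u-v$ are supported on $\supp u \cup \supp v$, which has cardinality at most $2s$; hence both are $2s$-sparse and the $RIP$ with constant $\delta_{2s}$ applies to each. Applying the upper $RIP$ bound to $A(u+v)$ and the lower $RIP$ bound to $A(u-v)$ yields
\[
4\sprod{Au,Av} \leq (1+\delta_{2s})\norm{u+v}_2^2 - (1-\delta_{2s})\norm{u-v}_2^2.
\]
Disjointness of supports also gives $\norm{u \pm v}_2^2 = \norm{u}_2^2 + \norm{v}_2^2 = 2$, so the right-hand side collapses to $4\delta_{2s}$, and we obtain $\sprod{Au,Av} \leq \delta_{2s}$. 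Running the same argument with the roles of the two $RIP$ bounds swapped (upper on $u-v$, lower on $u+v$) gives $\sprod{Au,Av} \geq -\delta_{2s}$, and together they yield $\abs{\sprod{Au,Av}} \leq \delta_{2s}$ as required.

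There is no real obstacle here; the only point to be careful about is noting explicitly that $u+v$ and $u-v$ inherit $2s$-sparsity from the disjointness of the supports, which is what makes the $RIP$ applicable on both sides of the polarization identity. The normalization-then-polarization structure is what bridges the gap between the quadratic form $\norm{A \cdot}_2^2$ controlled by $RIP$ and the bilinear quantity $\sprod{Au,Av}$ we actually want to bound.
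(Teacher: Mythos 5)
Your proof is correct: the normalize-then-polarize argument, using that $u\pm v$ are $2s$-sparse by disjointness of supports and that $\norm{u\pm v}_2^2=\norm{u}_2^2+\norm{v}_2^2=2$ after normalization, is exactly the standard proof of this estimate. The paper does not prove the lemma itself but cites it from \cite[Proposition 6.3]{MathIntroToCS}, where essentially this same argument is given, so there is nothing to add.
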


The next one is about the structure of vectors in the descent cone of the $\ell_1$-norm at a sparse vector.

\begin{lem} \label{lem:normIneqL1Desc} Let $x_0 \in \R^d$ be supported on the set $S_0$ with $\abs{S_0}=s$, and $u \in \mathcal{D}(\norm{\cdot}_1, x_0)$. Let furthermore $(S_i)_{i=1}^n$ be a partition of the set $\set{1, \dots, d} \backslash S_0$ with the properties
\begin{align} \label{eq:monCrit}
	\min_{i \in S_k} \abs{u(i)} & \geq \max_{i \in S_{k+1}} \abs{u(i)} \text{  and  }
	 \abs{S_k}  \leq s 
\end{align} 
for every $k=1, \dots n-1$. Then we have
\begin{align*}
	\sum_{i=0}^n \norm{u_{S_i}}_2 < \sqrt{5}\norm{u}_2.
\end{align*}
\end{lem}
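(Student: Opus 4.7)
The plan is to prove the inequality by the standard compressed-sensing telescoping argument, capped off with a small Cauchy--Schwarz trick.

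First I would extract the key algebraic consequence of $u \in \calD(\norm{\cdot}_1, x_0)$. By Example~\ref{ex:l1desc}, such $u$ satisfies
\begin{align*}
\sum_{i \notin S_0} \abs{u(i)} \ \leq\ -\sum_{i\in S_0}\sgn(x_0(i))u(i)\ \leq\ \norm{u_{S_0}}_1,
\end{align*}
i.e. $\norm{u_{S_0^c}}_1 \leq \norm{u_{S_0}}_1$. This is the only place where the descent-cone hypothesis enters.

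Next I would exploit the partition to control the tail $\sum_{k\geq 2}\norm{u_{S_k}}_2$ in terms of $\norm{u_{S_0}}_2$. The monotonicity condition \eqref{eq:monCrit} gives, for every $k \geq 2$,
\begin{align*}
\norm{u_{S_k}}_\infty = \max_{i\in S_k}\abs{u(i)} \ \leq\ \min_{j\in S_{k-1}}\abs{u(j)} \ \leq\ \frac{\norm{u_{S_{k-1}}}_1}{\abs{S_{k-1}}}.
\end{align*}
Combined with $\norm{u_{S_k}}_2 \leq \sqrt{\abs{S_k}}\norm{u_{S_k}}_\infty$ and the size bound $\abs{S_k}\leq s$, this yields $\norm{u_{S_k}}_2 \leq s^{-1/2}\norm{u_{S_{k-1}}}_1$. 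Summing over $k\geq 2$ and telescoping (the sets $S_1,\dots,S_{n-1}$ partition part of $S_0^c$) gives
\begin{align*}
\sum_{k=2}^n\norm{u_{S_k}}_2 \ \leq\ s^{-1/2}\sum_{k=1}^{n-1}\norm{u_{S_k}}_1 \ \leq\ s^{-1/2}\norm{u_{S_0^c}}_1\ \leq\ s^{-1/2}\norm{u_{S_0}}_1\ \leq\ \norm{u_{S_0}}_2,
\end{align*}
where in the final step I use Cauchy--Schwarz on the $s$-sparse vector $u_{S_0}$.

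Finally I would combine these to finish. The previous bound implies
\begin{align*}
\sum_{k=0}^n \norm{u_{S_k}}_2 \ \leq\ 2\norm{u_{S_0}}_2 + \norm{u_{S_1}}_2.
\end{align*}
Applying the elementary inequality $(2a+b)^2 \leq 5(a^2+b^2)$ (which is just Cauchy--Schwarz for the vectors $(2,1)$ and $(a,b)$) with $a = \norm{u_{S_0}}_2$ and $b = \norm{u_{S_1}}_2$, and then using $\norm{u_{S_0}}_2^2 + \norm{u_{S_1}}_2^2 \leq \norm{u}_2^2$ since $S_0$ and $S_1$ are disjoint, gives
\begin{align*}
\sum_{k=0}^n \norm{u_{S_k}}_2 \ \leq\ \sqrt{5}\sqrt{\norm{u_{S_0}}_2^2 + \norm{u_{S_1}}_2^2}\ \leq\ \sqrt{5}\,\norm{u}_2.
\end{align*}

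There is no real obstacle; the proof is a routine assembly of standard ingredients. The only mildly delicate point is recognizing that the correct way to package $2\norm{u_{S_0}}_2 + \norm{u_{S_1}}_2$ is via the coefficient vector $(2,1)$, which produces the constant $\sqrt{5} = \sqrt{2^2+1^2}$ rather than the looser $1+\sqrt{2}$ one would get from separately applying $a + b \leq \sqrt{2}\sqrt{a^2+b^2}$ to $\norm{u_{S_0}}_2 + \norm{u_{S_1}}_2$.
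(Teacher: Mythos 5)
Your proof is correct and follows essentially the same route as the paper's: the telescoping bound $\norm{u_{S_k}}_2 \leq s^{-1/2}\norm{u_{S_{k-1}}}_1$, the descent-cone inequality $\norm{u_{S_0^c}}_1 \leq \norm{u_{S_0}}_1 \leq \sqrt{s}\norm{u_{S_0}}_2$, and the $(2,1)$-Cauchy--Schwarz step producing $\sqrt{5}$. The only cosmetic difference is that the paper applies the final Cauchy--Schwarz to the pair $\left(\norm{u_{S_0}}_2, \norm{u_{S_0^c}}_2\right)$ after bounding $\norm{u_{S_1}}_2 \leq \norm{u_{S_0^c}}_2$, whereas you use $\left(\norm{u_{S_0}}_2, \norm{u_{S_1}}_2\right)$ directly.
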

\begin{proof}
	First, \eqref{eq:monCrit} implies that $\norm{u_{S_{k+1}}}_2 \leq \frac{1}{\sqrt{s}} \norm{u_{S_k}}_1$ for $k=1 , \dots n$ \cite[Lemma 6.10]{MathIntroToCS}. Therefore, we have the following estimate due to $\norm{v}_2 \leq \norm{v}_1$ for $v \in \R^d$:
	\begin{align*}
		\sum_{k=0}^n \norm{u_{S_k}}_2 \leq \norm{u_{S_0}}_2+ \norm{u_{S_1}}_2 +\frac{1}{\sqrt{s}}\sum_{k=2}^n \norm{u_{S_{k-1}}}_1 \leq \norm{u_{S_0}}_2 + \Vert u_{S_0^c} \Vert_2 + \frac{1}{\sqrt{s}}\Vert u_{S_0^c} \Vert_1
	\end{align*}
	Now, since $u \in \calD(\norm{\cdot}_1, x_0)$, we have (see Example \ref{ex:l1desc})
	\begin{align*}
	\Vert u_{S_0^c}\Vert_1  \leq -\sum_{i \in S_0} \sgn(x_0(i)) u_i \leq \norm{u_{S_0}}_1 \leq \sqrt{s}\norm{u_{S_0}}_2,
	\end{align*}
	where the last inequality is due to the fact that $u_{S_0}$ is supported on $S_0$.  This implies 
	\begin{align*}
		\norm{u_{S_0}}_2 + \Vert u_{S_0^c} \Vert_2 + \frac{1}{\sqrt{s}}\Vert u_{S_0^c} \Vert_1 &\leq 2 \norm{u_{S_0}}_2 + \Vert u_{S_0^c} \Vert_2 = (2,1) \cdot ( \norm{u_{S_0}}_2 , \Vert u_{S_0^c} \Vert_2 ) \\
		&\leq \sqrt{5} \sqrt{ \Vert u_{S_0} \Vert_2^2 + \Vert u_{S_0^c} \Vert_2^2}= \sqrt{5} \norm{u}_2,
	\end{align*}
	where we used the Cauchy-Schwarz inequality in the third step.
\end{proof}

With these two lemmas, we may prove Proposition \ref{prop:RIPASC}.

\begin{proof}[Proof of Proposition \ref{prop:RIPASC}]
	Let $u \in \calD(\norm{\cdot}_1, x_0)$ and $v \in \ker A$, where without loss of generality $\norm{u}_2=\norm{v}_2=1$. Our goal is to prove that necessarily $\sprod{u,v} < \cos(\theta)$. Let us begin by partitioning the set $\set{1, \dots, d} \backslash S_0$ into $n$ sets $S_1, \dots S_n$ so that $S_i \cap S_j = \emptyset$ for $i \neq j$, $\abs{S_i}= s$  for $i=1, \dots n-1$, $\abs{S_n}\leq s$, and the monotonicity criterion \eqref{eq:monCrit} is met for $u$. Then $n \leq \frac{d}{s}$, and  we have
	\begin{align*}
		\sprod{u,v} = \sum_{i=0}^n \sprod{u_{S_i}, v_{S_i}} &= \frac{1}{4} \left(\sum_{i=0}^n \norm{u_{S_i} + v_{S_i}}_2^2 - \norm{u_{S_i} - v_{S_i} }_2^2\right) \\
		&\leq \frac{1}{4} \left( \sum_{i=0}^n\frac{1}{1- \delta_s}\norm{Au_{S_i} + Av_{S_i}}_2^2 -\frac{1}{1+ \delta_s}\norm{Au_{S_i} - Av_{S_i}}_2^2 \right) \\
		&= \frac{1}{4} \left( \sum_{i=0}^n\frac{2\delta_s}{1- \delta_s^2}(\norm{Au_{S_i}}_2^2 + \norm{Av_{S_i}}_2^2) + \frac{4}{1-\delta_s^2}\sprod{Au_{S_i},Av_{S_i}}\right) ,
	\end{align*}
	where we in the second to last step used that $u_{S_i} \pm v_{S_i}$ for each $i$ is $s$-sparse, and the definition of $\delta_s$. Now since $v \in \ker A$, we have $Av_{S_i} = - \sum_{\substack{k=1, k \neq i}}^n Av_{S_k}$, and consequently,
	\begin{align*}
		\sprod{Au_{S_i},Av_{S_i}} = -\sum_{\substack{k=1 \\ k \neq i}}^n \sprod{Au_{S_i},Av_{S_k}} \leq \sum_{\substack{k=1 \\ k \neq i}}^n \delta_{2s} \norm{u_{S_i}}_2 \norm{v_{S_k}}_2 \leq \sum_{\substack{k=1}}^n \delta_{2s} \norm{u_{S_i}}_2 \norm{v_{S_k}}_2,
	\end{align*}
	where we in the second to last step applied Lemma \ref{lem:RIPROC}. Again using the definition of $\delta_s$, we conclude $\norm{Au_i}_2^2 \leq (1+\delta_s)\norm{u_i}_2^2$ and $\norm{Av_i}_2^2 \leq (1+\delta_s)\norm{v_i}_2^2$. Combining all of the previous estimates, we obtain
	\begin{align*}
		\sprod{u,v} \leq  \sum_{i=0}^n\frac{\delta_s}{2(1- \delta_s)}(\norm{u_{S_i}}_2^2 + \norm{v_{S_i}}_2^2) + \frac{\delta_{2s}}{1-\delta_s^2}\left(\sum_{i=0}^n \norm{u_{S_i}}_2\right) \left(\sum_{i=0}^n \norm{v_{S_i}}_2\right)
	\end{align*}
	Now we use $\norm{u},\norm{v}=1$, Lemma \ref{lem:normIneqL1Desc}, the inequality $\sum_{i=0}^n x_i \leq \sqrt{n+1}\sqrt{ \sum_{i=0}^n x_i^2}$ and $n \leq \frac{d}{s}$ to conclude
	\begin{align*}
		 \sprod{u,v} < \frac{\delta_s}{1- \delta_s} +\sqrt{5}\frac{\delta_{2s}}{1-\delta_s^2}\sqrt{\frac{d}{s}+1}\leq \cos(\theta),
	\end{align*}
	which is what we wanted to prove.
\end{proof}

Now let us turn to another criterion for robust and stable recovery using $\ell_1$-minimization: the \emph{Robust Null-Space Property} or $RNSP$. Let $0\leq\gamma<1$ and $\tau>0$. A matrix $A \in \R^{m,d}$ satisfies the $(\gamma,\tau)$-$RNSP$ with respect to the index set $T \sse \set{1,2, \dots, d}$ if for every $x \in \R^d$, we have
	\begin{align}
		\norm{x_T}_1 \leq \gamma \norm{x_{T^c}}_1 + \tau \norm{Ax}_2. \label{eq:RNSP}
	\end{align}
	In fact, it turns out that the $RNSP$ with respect to an index set $T$ is equivalent to that the $ASC$ is fulfilled for any $x_0$ supported on $T$, in the sense specified by the following theorem.
	
	\begin{theo} \label{theo:RNSPvsASC}
		Let $A\in \R^{m,d}$.
		\begin{enumerate}[(1)]
			\item If the $(\gamma,\tau)$-$RNSP$ is satisfied, then the $ASC$-condition is satisfied uniformly for all $x_0$ supported on $T$, i.e. there exists a $\theta>0$ with
			\begin{align}
				\calD^{\wedge \theta}(\norm{\cdot}_1, x_0) \cap \ker A = \emptyset \label{eq:RNSPvsASCEq}
			\end{align}
			for every $x_0$ supported on $T$.
			\item If there exists a $\theta>0$ such that \eqref{eq:RNSPvsASCEq} holds for every $x_0$ supported on $T$, there exists $0\leq\gamma<1$ and $\tau>0$ such that the $(\gamma,\tau)$-$RNSP$ holds with respect to $T$.
		\end{enumerate}
	\end{theo}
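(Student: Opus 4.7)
The plan is to convert both directions into statements about the restricted singular value $\sigma_{\calD(\norm{\cdot}_1,x_0)\to\R^m}(A)$ via Lemma \ref{lem:singValAndGeomConv}, and then to juggle the $\ell_1$--$\ell_2$ comparison inequalities. For direction (1), fix any $x_0$ supported on $S_0\sse T$ and any $u\in\calD(\norm{\cdot}_1,x_0)$. Example \ref{ex:l1desc} gives $\norm{u_{S_0^c}}_1\leq \norm{u_{S_0}}_1$, which in combination with $T^c\sse S_0^c$ and $S_0\sse T$ sharpens to $\norm{u_{T^c}}_1\leq \norm{u_T}_1$. Feeding this into the $(\gamma,\tau)$-$RNSP$ applied to $u$ yields $(1-\gamma)\norm{u_T}_1\leq \tau\norm{Au}_2$, and hence
\begin{equation*}
	\norm{u}_2 \leq \norm{u}_1 = \norm{u_T}_1+\norm{u_{T^c}}_1 \leq 2\norm{u_T}_1 \leq \tfrac{2\tau}{1-\gamma}\norm{Au}_2.
\end{equation*}
The bound is independent of $x_0$, so $\sigma_{\calD(\norm{\cdot}_1,x_0)\to\R^m}(A)\geq \tfrac{1-\gamma}{2\tau}>0$ uniformly in $x_0$ supported on $T$, and Lemma \ref{lem:singValAndGeomConv} supplies a common $\theta>0$ satisfying \eqref{eq:RNSPvsASCEq}.

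For direction (2), Lemma \ref{lem:singValAndGeomConv} produces a single constant $c:=\sin(\theta)\sigma_{\min}(A)>0$ such that $\norm{Au}_2\geq c\norm{u}_2$ whenever $u\in\calD(\norm{\cdot}_1,x_0)$ and $x_0$ is supported on $T$. By choosing $x_0$ supported on $\supp u_T\sse T$ with $x_0(i)=-\sgn(u_T(i))$ there, Example \ref{ex:l1desc} shows that this inequality in fact holds for \emph{every} $u$ satisfying $\norm{u_{T^c}}_1\leq\norm{u_T}_1$. Now let $v\in\R^d$ be arbitrary. If $\norm{v_T}_1\geq\norm{v_{T^c}}_1$, apply the bound to $v$ itself and combine with $\norm{v_T}_1\leq\sqrt{\abs{T}}\norm{v_T}_2\leq\sqrt{\abs{T}}\norm{v}_2$ to obtain $\norm{v_T}_1\leq(\sqrt{\abs{T}}/c)\norm{Av}_2$. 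Otherwise set $\beta:=\norm{v_T}_1/\norm{v_{T^c}}_1\in(0,1)$ and $u:=v_T+\beta v_{T^c}$; by construction $\norm{u_T}_1=\norm{u_{T^c}}_1$, so the bound applies to $u$ and gives $(c/\sqrt{\abs{T}})\norm{v_T}_1\leq c\norm{u}_2\leq\norm{Au}_2$. Writing $Au=Av-(1-\beta)Av_{T^c}$ and using $\norm{v_{T^c}}_2\leq\norm{v_{T^c}}_1$ yields $\norm{Au}_2\leq\norm{Av}_2+\sigma_{\max}(A)(\norm{v_{T^c}}_1-\norm{v_T}_1)$; rearranging produces the $RNSP$ bound
\begin{equation*}
	\norm{v_T}_1 \leq \gamma^*\norm{v_{T^c}}_1 + \tau^*\norm{Av}_2,\quad \gamma^* := \tfrac{\sigma_{\max}(A)}{\sigma_{\max}(A)+c/\sqrt{\abs{T}}}<1,\quad \tau^*:=\tfrac{1}{\sigma_{\max}(A)+c/\sqrt{\abs{T}}}.
\end{equation*}
A short computation reveals $\tau^*/(1-\gamma^*)=\sqrt{\abs{T}}/c$, so the same pair $(\gamma^*,\tau^*)$ also absorbs the easy first case.

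The main obstacle is keeping $\gamma$ strictly below one in direction (2). A naive application of the ASC-bound to $v$ itself only succeeds when $v$ already lies in a descent cone (that is, when $\norm{v_T}_1\geq\norm{v_{T^c}}_1$), and the complementary case forces us to smuggle $v$ into the cone via the scaling $u=v_T+\beta v_{T^c}$ and then to absorb the difference $Au-Av$ using $\sigma_{\max}(A)$. It is precisely this controlled perturbation of the ``extra'' $\ell_1$-mass on $T^c$ that yields a genuinely $\gamma<1$ constant; without such a trade-off the best one can hope to extract is $\gamma=1$, which is insufficient for the $RNSP$.
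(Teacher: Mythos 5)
Your direction (1) is essentially the paper's own argument: both reduce the claim to a uniform positive lower bound on the restricted singular value via the observation that any $u$ in the descent cone of a vector supported on $T$ satisfies $\norm{u_{T^c}}_1 \le \norm{u_T}_1$, both arrive at the same constant $\tfrac{1-\gamma}{2\tau}$, and both then invoke Lemma \ref{lem:singValAndGeomConv} to convert this into a uniform $\theta$.

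Direction (2) is where you genuinely diverge. The paper shows that every $x$ with $\norm{x_T}_1 \ge \gamma\norm{x_{T^c}}_1$, for $\gamma$ sufficiently close to $1$, lies within spherical angle $\theta/2$ of some descent cone $\calD(\norm{\cdot}_1,x_0)$ with $x_0$ supported on $T$, and then uses the triangle inequality for the angular metric (Remark \ref{rem:angleMetric}) to force $x$ out of $(\ker A)^{\wedge \theta/2}$, yielding $\norm{Ax}_2 \ge \sigma_{\min}(A)\sin(\theta/2)$; its $\gamma$ is dictated by $\theta$ alone. You instead note that the set $\set{u : \norm{u_{T^c}}_1 \le \norm{u_T}_1}$ is \emph{exactly} the union of the relevant descent cones (via $x_0 = -\sgn(u_T)$ on $\supp u_T$), so the bound $\norm{Au}_2 \ge c\norm{u}_2$ holds there with no angular slack, and you handle the remaining $v$ by shrinking $v_{T^c}$ by the factor $\beta$ to land in that set and absorbing the perturbation $A u - Av$ with $\sigma_{\max}(A)$. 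This is a correct and arguably cleaner route; the trade-off is that your $\gamma^*$ involves the condition number of $A$, whereas the paper's depends only on $\theta$. One slip at the very end: the pair $(\gamma^*,\tau^*)$ does \emph{not} absorb the first case. If, say, $v$ is supported on $T$, your case-one bound only gives $\norm{v_T}_1 \le (\sqrt{\abs{T}}/c)\norm{Av}_2 = \tfrac{\tau^*}{1-\gamma^*}\norm{Av}_2$, which is strictly weaker than the required $\norm{v_T}_1 \le \tau^*\norm{Av}_2$ since $\norm{v_{T^c}}_1=0$; the identity $\tau^*/(1-\gamma^*)=\sqrt{\abs{T}}/c$ does not rescue this. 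The repair is immediate because the $RNSP$ is monotone in $\tau$: keep $\gamma^*$ and take $\tau = \sqrt{\abs{T}}/c \ge \tau^*$, which covers both cases.
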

	
	\begin{proof}
		$(1)$. Due to Lemma \ref{lem:singValAndGeomConv}, it suffices to prove that $\sigma_{\calD(\norm{\cdot}_1, x_0)}(A) >\sigma_0>0$ for every $x_0$ supported on $T$. To this end, note that the $RNSP$ implies that
		\begin{align*}
			\min_{\substack{x \in \calD(\norm{\cdot}_1, x_0) \\ \norm{x}_2=1}} \norm{Ax}_2 \geq \frac{1}{\tau}\min_{\substack{x \in \calD(\norm{\cdot}_1, x_0) \\ \norm{x}_2=1}} \left(\norm{x_T}_1 - \gamma \Vert x_{T^c} \Vert_1 \right) &\geq \frac{1-\gamma}{\tau}  \min_{\substack{x \in \calD(\norm{\cdot}_1, x_0) \\ \norm{x}_2=1}} \norm{x_T}_1   \\
			&\geq \frac{1-\gamma}{\tau}  \min_{\substack{x \in \calD(\norm{\cdot}_1, x_0) \\ \norm{x}_2=1}} \frac{1}{2}\norm{x}_1 \geq \frac{(1-\gamma)}{2\tau}.
		\end{align*}
		We used that since $x \in \calD(\norm{\cdot}_1,x_0)$, $\norm{x_T}_1 \geq \Vert x_{T^c} \Vert_1$. This in particular implies that $\norm{x_T}_1 \geq \frac{1}{2}\norm{x}_1$ The claim has been proven. 
		
		$(2)$. Suppose that there exists a $\theta$ with \eqref{eq:RNSPvsASCEq} for every $x_0$ supported on $T$. Let us begin by arguing that this implies that there exists a $0\leq\gamma<1$ with the following property: If $\norm{x_T}_1 \geq \gamma \Vert x_{T^c} \Vert_1$, there exists an $x_0$ supported on $T$ with $x \in \calD^{\wedge \frac{\theta}{2}}( \norm{\cdot}_1, x_0)$. 
		 
		 Consider an $x$ with $\norm{x_T}_1 \geq \gamma \Vert x_{T^c} \Vert_1$ (where the value of $\gamma$ is yet to be determined) and consider $x_0 := -x_T$. Then we have  $\tilde{x} \in \calD( \norm{\cdot}_1, x_0)$, where
		\begin{align*}
			\tilde{x}(i) = \begin{cases} x(i) & i \in T \\
			\gamma x(i) & i \in T^c, \end{cases}
		\end{align*}
        since	
                \begin{align*}			
                        \sum_{i \in T^c} \vert \tilde{x}(i)\vert = \gamma\sum_{i \in T^c} \vert {x}(i)\vert \leq \sum_{i \in T} \vert {x}(i)\vert = -\sum_{i \in T} \sgn x_0(i) \tilde{x}(i),			
                \end{align*}                			
	since $\Vert x_T \Vert_1 \geq \gamma\Vert x_{T^c} \Vert_1$ (see also Example \ref{ex:l1desc}). Now we have 
		\begin{align*}
			\frac{\sprod{x, \tilde{x}}}{\norm{x}_2 \norm{\tilde{x}}_2} = \frac{\sum_{i \in T} x(i)^2 + \gamma \sum_{i \in T^c} x(i)^2}{\norm{x}_2 \norm{\tilde{x}}_2} = \frac{\norm{x_T}^2 + \gamma (1-\norm{x_T}^2)}{\sqrt{\norm{x_T}^2 + \gamma^2(1 - \norm{x_T}^2})} \geq \inf_{0 \leq a \leq 1 } \frac{a + \gamma (1-a)}{\sqrt{a +  \gamma^2 (1-a)}}
		\end{align*}
		where we in the second to last step without loss of generality assumed that $\norm{x}_2=1$. Since $0 \leq a, \gamma \leq 1$, we have $\sqrt{a + \gamma^2(1-a)} \leq \sqrt{a+ \gamma(1-a)}$, and consequently $\frac{a + \gamma (1-a)}{\sqrt{a +  \gamma^2 (1-a)}} \geq \sqrt{a+ \gamma(1-a)}$. Hence, we have
 $$\inf_{0 \leq a \leq 1 } \frac{a + \gamma (1-a)}{\sqrt{a +  \gamma^2 (1-a)}} \geq \inf_{0 \leq a \leq 1 }\sqrt{a+ \gamma(1-a)} = \sqrt{\gamma} \geq \cos\left(\frac{\theta}{2}\right),$$
	if we choose $\gamma$ close enough to $1$. This proves that $x \in \calD^{\wedge \frac{\theta}{2}}( \norm{\cdot}_1, x_0)$.
	
	Now we claim that there exist a $\tau>0$ such that the $(\gamma, \tau)$-$RNSP$ with respect to $T$ is satisfied. To see this, first note that \eqref{eq:RNSP} is trivial for $x$ with $\norm{x_T}_1 \leq \gamma \Vert x_{T^c} \Vert_1$. For the other $x$, which we without loss of generality may assumed to be $\ell_2$-normalized, we know by the above argument that $x \in \calD^{\wedge \frac{\theta}{2}}( \norm{\cdot}_1, x_0)$ for some $x_0$ supported on $T$. This implies that $x \notin (\ker A)^{\wedge \frac{\theta}{2}}$, since if indeed $x \in (\ker A)^{\wedge \frac{\theta}{2}}$, there would exist  unit norm vectors $y \in \ker A $ and $z \in \mathcal{D}(\norm{\cdot}_1, x_0)$ such that $\arccos \sprod{x,y} \leq \frac{\theta}{2}$ and $\arccos\sprod{x,z} \leq \frac{\theta}{2}$.  Remark \ref{rem:angleMetric} would then imply that $\arccos\sprod{y, z} \leq \theta$, i.e., $\sprod{x,y} \geq \cos(\theta)$, which is a contradiction to \eqref{eq:RNSPvsASCEq}.
	
	But, $x \notin (\ker A)^{\wedge \frac{\theta}{2}}$ implies that $\norm{x + \ker A}_2 \geq \sin \left(\frac{\theta}{2}\right)$, which is seen as in the proof of Theorem \ref{prop:robustCrit}; for $y \in \ker A$ arbitrary, we have
	\begin{align*}
		\norm{x-y}_2^2 \geq 1 + \norm{y}_2^2 - 2 \cos\left(\frac{\theta}{2}\right)\norm{y} = \left( \norm{y}_2 - \cos \left(\frac{\theta}{2}\right)\right)^2 +1 - \cos^2\left(\frac{\theta}{2}\right)\geq \sin^2 \left(\frac{\theta}{2}\right).
	\end{align*}
	Therefore, we have for normalized $x$ with $\norm{x_T}_1 \geq \gamma \Vert x_{T^c} \Vert_1$
	\begin{align*}
		\norm{Ax}_2  \geq \sigma_{\min}(A) \norm{x + \ker A}_2 \geq \sigma_{\min}(A) \sin \left(\frac{\theta}{2}\right).
	\end{align*}
	Therefore, we can choose $\tau = \sqrt{\abs{T}}(\sigma_{\min}(A) \sin \theta)^{-1}$, since then
	\begin{align*}
		\norm{x_{T}}_1 \leq \sqrt{T} \norm{x}_2 \leq  \sqrt{\abs{T}}\left(\sigma_{\min}(A) \sin \left(\frac{\theta}{2}\right)\right)^{-1} \norm{Ax}_2 \leq \gamma \Vert x_{T^c} \Vert_1 + \tau \norm{Ax}_2,
	\end{align*}
	i.e. \eqref{eq:RNSP} is satisfied for any $x \in \R^d$.
		\end{proof}

\subsection{How Many Gaussian Measurements Are Needed to Secure the $ASC$ for a Given $\theta$?} \label{sec:GaussMeas}

In this final section we will assume that the linear map $A : \calH \to \R^m$ is Gaussian, and ask how large $m$ has to be such that $\theta$-$ASC$ for some given $\theta$ is satisfied with high probability.

Typically in compressed sensing, one can prove that as long as the parameter $m$ is larger than a certain threshold, depending on the dimension of $\calH$ and the type of structured signals, a program of the form $(\calP^\epsilon_f)$ will exhibit stability and robustness. One way of proving such results is to use Gordon's ''Escape through a mesh''-lemma \cite{Gordon1988}. This lemma relates the so called \emph{Gaussian width} $w(T)$ to the dimension that a uniformly distributed random subspace has to have in order to miss a set $T$ contained in the unit sphere $\sph(\calH)$ of $\cal H$ (which clearly is equivalent to missing $\cone(T)$). Let us make this idea more precise. If $g$ is a Gaussian vector in $\cal H$, it is defined by
	\begin{align*}
		w(T) = \mathbb{E} \bigg( \sup_{x \in T} \sprod{x,g} \bigg).
	\end{align*}
	There are several ways to state the ''Escape through a mesh''-lemma. The following one from \cite[Th. 9.21]{MathIntroToCS} is probably the most convenient for us: let $\ell_m$ denote the expected length of a Gaussian vector in $\R^m$, i.e. $\ell_m = \erw{\norm{g}_2}$ for $g \in \R^m$ Gaussian, and $T$ a subset of $\sph(\calH)$. Then we have
	\begin{align} \label{eq:meshThickEscape}
		\prb{ \inf_{x \in T} \norm{Ax}_2 \leq \ell_m - w(T) -t } \leq \exp\left(\frac{-t^2}{2}\right).
	\end{align}
	This particularly implies that if $\ell_m>  w(T)+ 2\sqrt{\eta^{-1}}$,
	\begin{align} \label{eq:meshEscape} 
		\prb{ \ker A \cap T = \emptyset} > \prb{\inf_{x \in T} \norm{Ax}_2 \geq \ell_m - w(T) -2\sqrt{\eta^{-1}} }>1-\eta.
	\end{align}
	Since $\ell_m \approx \sqrt{m}$, the estimate \eqref{eq:meshEscape} qualitatively tells us that if $m > w(T)^2$, $\ker A$ will probably miss the set $\cone(T)$. If we choose $T$ such that $\cone(T)= \calD(f, x_0) \cap \sph(\calH)$, we can relate this to exact recovery through the program $\calP_f$.
	
	\eqref{eq:meshThickEscape} suggests that it is possible to directly relate the  Gaussian width of the cone $C$ to the threshold amount of measurements needed to establish $\sigma_{C \to \R^m}(A)>0$, or equivalently the $ASC$ for some $\theta>0$, with high probability. This was discussed in detail in \cite{amelunxen2014gordon,chandrasekaran2012convex}, and we refer to those articles for more information.
	
We can however also compare the Gaussian width of $C^{\wedge \theta} \cap \sph(\calH)$ to the one of $C \cap \sph(\cal H)$ for a convex cone $C$. This is also interesting in its own right, since Gaussian widths in general are hard to estimate. We have the following result.

\begin{prop} \label{prop:gaussWidthExt}
	Let $C \sse \calH$ be a convex cone, and $\theta \in [0, \frac{\pi}{2}]$. Then we have
	\begin{align*}
		w(C \cap \sph(\calH)) \leq  w\left(C^{\wedge \theta} \cap \sph(\calH) \right) \leq \cos(\theta)w(C \cap \sph(\calH)) + \sin(\theta) \ell_\calH
	\end{align*}
	where $\ell_\calH:= \erw{\norm{g}}$ is the expected length of a Gaussian vector in $\calH$. In particular,
	\begin{align*}
		w(C\cap \sph(\calH))^2 \leq  w\left(C^{\wedge \theta}\cap \sph(\calH)\right)^2 \leq w(C\cap \sph(\calH))^2 + O\left(\sin(\theta)\dim \calH \right).
	\end{align*}
\end{prop}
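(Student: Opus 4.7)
The plan is to reduce the statement to a purely geometric inclusion of sets, from which both Gaussian-width estimates fall out by standard computations. The lower bound is immediate: by Remark (i), $C \sse C^{\wedge \theta}$, so intersecting with $\sph(\calH)$ preserves the inclusion, and monotonicity of the supremum in the definition of $w$ survives taking expectations.

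For the upper bound, my first and main step is to establish the Minkowski inclusion
\begin{align*}
 C^{\wedge\theta} \cap \sph(\calH) \sse \cos(\theta)\,(C\cap\sph(\calH)) + \sin(\theta)\, B_\calH,
\end{align*}
where $B_\calH$ is the closed unit ball of $\calH$. Given $x \in C^{\wedge\theta}\cap\sph(\calH)$, I use the cone structure to pick a unit-norm $y \in C$ with $\sprod{x,y} \geq \cos(\theta)$ (cf.\ the alternative form of the defining condition in Remark (ii)), and write $x = \cos(\theta) y + w$ with $w := x - \cos(\theta) y$. Expanding the norm gives $\norm{w}^2 = 1 - 2\cos(\theta)\sprod{x,y} + \cos^2(\theta) \leq \sin^2(\theta)$, so $w \in \sin(\theta) B_\calH$.

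Plugging this inclusion into the definition of $w$ and using that the supremum decouples over a Minkowski sum of sets yields, for each fixed $g \in \calH$,
\begin{align*}
 \sup_{x \in C^{\wedge\theta}\cap\sph(\calH)}\sprod{x,g} \leq \cos(\theta)\sup_{y \in C\cap\sph(\calH)}\sprod{y,g} + \sin(\theta)\sup_{u \in B_\calH}\sprod{u,g} = \cos(\theta)\sup_{y \in C\cap\sph(\calH)}\sprod{y,g} + \sin(\theta)\norm{g},
\end{align*}
where the last equality is Cauchy-Schwarz. Taking expectations term by term produces the claimed bound $\cos(\theta) w(C\cap\sph(\calH)) + \sin(\theta)\ell_\calH$. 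The ``in particular'' statement then follows by squaring and crudely bounding the cross term and the $\sin^2(\theta)\ell_\calH^2$ term using $w(C\cap\sph(\calH)) \leq \ell_\calH \leq \sqrt{\dim \calH}$, both of which follow from Jensen applied to $\erw{\norm{g}}$ and $\erw{\norm{g}^2} = \dim \calH$.

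The only subtle point is selecting the right decomposition of $x$ in the geometric step: the naive split $x = \sprod{x,y} y + (\text{perp part})$ only gives a factor of $1$ in front of $\sprod{y,g}$, which is too weak to produce the $\cos(\theta)$ scaling. The trick is to translate by $\cos(\theta) y$ rather than $\sprod{x,y}\,y$, which pushes the excess $\sprod{x,y} - \cos(\theta) \geq 0$ into the orthogonal component while keeping the total norm of that component bounded by $\sin(\theta)$.
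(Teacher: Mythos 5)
Your proof is correct, and the core of the upper bound is obtained by a genuinely different route from the paper's. The paper works on the sphere: it picks the same witness $y \in C \cap \sph(\calH)$ with $\sprod{x,y} \geq \cos(\theta)$, but then invokes the triangle inequality for the angular metric $\delta(x,y) = \arccos\sprod{x,y}$ (Remark \ref{rem:angleMetric}) to get $\delta\left(y, \tfrac{g}{\norm{g}}\right) \leq \delta\left(x, \tfrac{g}{\norm{g}}\right) + \theta$, and then unwinds this with the cosine addition formula, which forces a case distinction depending on whether $\delta\left(y, \tfrac{g}{\norm{g}}\right) < \theta$. Your argument replaces all of that with the linear decomposition $x = \cos(\theta)\,y + w$, $\norm{w} \leq \sin(\theta)$, i.e.\ the Minkowski inclusion $C^{\wedge\theta} \cap \sph(\calH) \sse \cos(\theta)\,(C \cap \sph(\calH)) + \sin(\theta)\,B_\calH$; the norm computation $\norm{w}^2 = 1 - 2\cos(\theta)\sprod{x,y} + \cos^2(\theta) \leq \sin^2(\theta)$ is right, and the width bound then follows from additivity of $\sup_{\cdot}\sprod{\cdot, g}$ over Minkowski sums together with $\cos(\theta) \geq 0$ (which is where the hypothesis $\theta \in [0,\pi/2]$ enters). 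Both routes land on the identical pointwise estimate $\sprod{x,g} \leq \cos(\theta)\sprod{y,g} + \sin(\theta)\norm{g}$, but yours avoids the trigonometric bookkeeping and the case split, and the set inclusion you prove is a slightly stronger, reusable statement. Your closing remark about why the naive split $x = \sprod{x,y}y + (\text{perp})$ is insufficient is accurate for the stated inequality with the $\cos(\theta)$ prefactor (though that weaker split would still suffice for the squared ``in particular'' bound). The squaring step at the end matches the paper's, using $w(C \cap \sph(\calH)) \leq \ell_\calH \leq \sqrt{\dim\calH}$ via Jensen.
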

\begin{proof}
	Since $C \sse C^{\wedge \theta}$, we trivially have $w(C\cap \sph(\calH)) \leq w( C^{\wedge \theta}\cap \sph(\calH))$. To prove the upper bound, notice that if a unit vector $x$ lies in $C^{\wedge \theta}$, there exists a unit vector $y \in C$ with $\sprod{x,y} \geq \cos(\theta)$, or (see Remark \ref{rem:angleMetric}) $\delta(x,y) \leq \theta$. The triangle inequality for the sphere metric $\delta$ now implies for every $g \in \calH$ 
	\begin{align*}
		\delta\left( y, \tfrac{g}{\norm{g}} \right)= \leq \delta\left( x, \tfrac{g}{\norm{g}} \right) + \delta(x,y) \leq \delta\left( x, \tfrac{g}{\norm{g}} \right)+ \theta.
\end{align*}	 
Using the monotonicity properties of $\cos$, this implies
\begin{align*}
	\sprod{x, \frac{g}{\norm{g}}}= \cos\left(\delta\left( x, \tfrac{g}{\norm{g}} \right)\right) \leq \cos\left ( \pos\left(\delta\left( y, \tfrac{g}{\norm{g}} \right) - \theta \right)\right),
\end{align*} 
where $\pos(t) = \max(t,0)$ denotes the positive part of a real number $t$. Using the cosine addition formula, this yields
\begin{align*}
\sprod{x, \tfrac{g}{\norm{g}}} &\leq \begin{cases} 1 &\text{ if } \delta\left( y, \tfrac{g}{\norm{g}} \right)<\theta \\
												 \sprod{y, \tfrac{g}{\norm{g}}} \cos(\theta) + \sin\left(\delta\left( y, \tfrac{g}{\norm{g}} \right)\right) \sin(\theta) & \text{ else}
\end{cases} \\
&\leq \cos(\theta)  \sprod{y, \tfrac{g}{\norm{g}}} + \sin(\theta),
\end{align*}
where the last step follows from $\sin\left(\delta\left( y, \frac{g}{\norm{g}} \right)\right)\leq 1$, $\sin(\theta) \in [0,1]$, $\cos(\theta) \geq 0$ (since $\theta \in [0, \frac{\pi}{2}]$) and 
\begin{align*}
	\cos(\theta)  \sprod{y, \tfrac{g}{\norm{g}}} + \sin(\theta) \geq \cos^2(\theta) + \sin^2(\theta) =1
\end{align*}
for $y$ with $\delta\left( y, \tfrac{g}{\norm{g}} \right)<\theta$. Consequently
	\begin{align*}
		w\left(C^{\wedge \theta}\cap \sph(\calH)\right) = \erw{ \sup_{x \in C^{\wedge \theta}, \norm{x}=1} \sprod{x,g}} & \leq \erw{ \sup_{y \in C, \norm{y}=1} \cos(\theta) \sprod{y,g} + \norm{g} \sin(\theta)} \\
		&= \cos(\theta)w(C\cap \sph(\calH)) + \sin(\theta)\ell_\calH.
		\end{align*}
		For the second inequality, we simply need to square the first one:
\begin{align*}
w\left(C^{\wedge \theta} \cap \sph(\calH) \right)^2 &\leq \cos^2(\theta)w(C \cap \sph(\calH))^2 +  2\cos(\theta)\sin(\theta)w(C \cap \sph(\calH)\ell_\calH + \sin^2(\theta) \ell_\calH^2 \\
&\leq w(C \cap \sph(\calH))^2 +  \left(2\cos(\theta) + \sin(\theta)\right)\sin(\theta) \ell_\calH^2 \leq  w(C \cap \sph(\calH))^2 + \sqrt{5}\sin(\theta)\ell_\calH^2,
\end{align*}		
		 where we utilized that $w(C\cap \sph(\calH)) \leq \erw{\norm{g}} =\ell_\calH = O( \sqrt{\dim \calH} )$.
	\end{proof}
	
	The $ASC$ states that if $\calD^{\wedge \theta}(f,x_0) \cap \ker A = \set{0}$, the program $(\calP_f^\epsilon)$ will stably recover $x_0$. According to above discussion, this will be the case provided $m$ is larger than $w(\mathcal{D}^{\wedge\theta}(f,x_{0}) \cap \sph(\mathcal{H}))^{2}$. The last proposition therefore shows that in order to achieve the $ASC$ for a given $\theta$, we need $O(\sin\left(\theta\right)\dim \calH)$ more Gaussian measurements than we would need to ensure \eqref{eq:geomExactCond}. Let us end by considering an example which shows that this claim in general cannot be substantially improved. For this, we will use an alternative tool to calculate thresholds as described above: the \emph{statistical dimension $\delta(C)$ of a convex cone $C$}. It was introduced in  \cite{AmelunxLotzMcCoyTropp2014}, where the authors also proved that if $m> \delta(C)$, the probability that $\ker A \cap C = \set{0}$ is very high. $\delta(C)$ is always close to $w(C \cap \sph(\calH))$ -- in fact, we have \cite[Prop 10. 2]{AmelunxLotzMcCoyTropp2014}
	\begin{align*}
		w(C)^2 \leq \delta(C) \leq w(C)^2 +1.
	\end{align*}
	
	\begin{example}
		Consider the circular cone $\text{Circ}_d(\alpha) \sse \R^d$,
		\begin{align*}
			\text{Circ}_d(\alpha) = \set{ x \in \R^d \vert x(1) \geq \cos(\alpha) \norm{x}_2}
\end{align*}		 
According to \cite[Proposition 3.4]{AmelunxLotzMcCoyTropp2014}, the statistical dimension of $\text{Circ}_d(\alpha)$ is equal to $d\sin^2(\alpha) +O(1)$. It is furthermore clear that $\text{Circ}^{\wedge \theta}_d(\alpha) = \text{Circ}_d(\alpha+\theta)$ (in particular again a convex cone). Hence,
\begin{align*}
	\delta\left(\text{Circ}^{\wedge \theta}_d(\alpha)\right) &= d \sin^2(\alpha+\theta) + O(1) = d (\sin^2(\alpha)\cos^2(\theta) + \frac{1}{2}\sin(2\alpha)\sin(2\theta) + \cos^2(\alpha) \sin^2(\theta)) +O(1).
\end{align*}
If we assume that $\theta$ is small, we have $\cos^2(\theta) \approx 1$, $\sin^2(\theta) \approx 0$ and $\sin(2\theta) \approx 4\sin\left(\theta \right)$. Hence, we obtain for such $\theta$
\begin{align*}
	\delta\left(\text{Circ}^{\wedge \theta}_d(\alpha)\right) \approx \delta( \text{Circ}_d(\alpha)) + O\left(d\sin\left(\theta\right)\right) + O(1).
\end{align*}
	\end{example}

\subsection*{Acknowledgement}
The author acknowledges support by Deutsche
Forschungsgemeinschaft (DFG) Grant KU 1446/18~-~1, as well as Grant SPP 1798, as well as the Deutscher Akademischer Austausch Dienst (DAAD), which supported him with a scholarship for his master studies, during which the research was completed. He also likes to thank Gitta Kutyniok for the supervision of the Master Thesis leading up to this article, Martin Genzel for careful proofreading and many useful suggestions for increasing the readability of the paper and Dae Gwan Lee  for pointing out a few errors, and suggesting ways to improve some statements in the paper. He also wishes to thank the anonymous reviewers, whose comments, criticisms and suggestions greatly improved the final version of this article, in particular regarding Sections \ref{sec:ASCvsConds} and \ref{sec:GaussMeas}.

\bibliographystyle{abbrv}
\bibliography{bibliographyMastersThesis}

\appendix
\section[Appendix A:]{Proof of Lemma \ref{lem:boundingAngle}} \label{app:A}

Here, as promised, we present the proof of Lemma \ref{lem:boundingAngle}.
\begin{proof}[Proof of Lemma \ref{lem:boundingAngle}]
	If we define for $x_0 \in \calC$
	\begin{align*}
		\mu(x_0) = \sup_{x \in P \backslash\set{x-x_0}} \sup_{z \in U, \norm{z}=1} \sprod{\frac{x-x_0}{\norm{x-x_0}},z},
	\end{align*}
	we need to prove that $\mu(x_0)<1$ for each $x_0 \in \calC$, and further more that $\mu(x_1)=\mu(x_2)$ if $x_1$ and $x_2$ lie in the relative interior of the same face $F$. To do this, let us begin by remarking that for each $x \in P$ and $z \in U$, $\norm{z}=1$, there must be
	\begin{align} \label{eq:smallerThanOne}
		\sprod{\frac{x-x_0}{\norm{x-x_0}},z}<1
	\end{align}
	Otherwise, the equality statement of the Cauchy-Schwarz inequality would imply that $x-x_0 = \lambda z$ for some $\lambda \in \R$, which means that $x \in x_0 +U$, which is ruled out by assumption.
	
	\eqref{eq:smallerThanOne} implies that we only need to prove that the supremum defining $\mu(x_0)$ is attained in some points $\hat{x} \in P \backslash \set{x_0}$ and $\hat{z} \in U \cap \sph_\calH$, where we introduced the notation $\sph_\calH = \set{z \in \calH \ \vert \ \norm{z}=1}$. Towards this end, let $(x_n)$ and $(z_n)$ be such that
	\begin{align} \label{eq:approxSeq}
		\sprod{\frac{x-x_0}{\norm{x-x_0}},z} \to \mu(x_0).
	\end{align}
	Since $\calH$ is finite-dimensional, and $P$ and $U \cap \sph_\calH$ are closed, there will exist $\hat{x} \in P$ and $\hat{z}\in U \cap \sph_\calH$ such that, possibly after going over to subsequences, $x_n \to \hat{x}$, $z_n \to z$. It is only left to prove that we can choose $x_n$ in such a way so that $\hat{x} \neq x_0$.
	
	So suppose $x_n \to x_0$. By the assumption of the theorem, $x_0$ is a member of some face $F$ of $P$, which by definition is defined by a set of linear equalities and inequalities. If we write $H^{-}(c, \gamma)= \set{ x \in H : \sprod{c,x} \leq \gamma}$ and $H(c, \gamma)= \set{ x \in H : \sprod{c,x} = \gamma}$, we have
	\begin{align*}
		F = \bigcap_{i=1}^k H(a_i, \alpha_i)  \cap  \bigcap_{j=1}^\ell H^{-}(b_j, \beta_j),
	\end{align*}
	say, with $P = \bigcap_{i=1}^k H^{-}(a_i, \alpha_i) \cap \bigcap_{j=1}^\ell H^{-}(b_j, \beta_j)$.  Hereby, we may without loss of generality assume that all vectors $(a_i)$ and $(b_j)$ are normalized, and furthermore that $\sprod{b_j, x_0}<\beta_j$ for all $j$ (if not, we may rename some of the $b_j$ to $a_i$'s).
	
	If we define $\tilde{x}_n = x_0 + \lambda_n v_n$ for some $v_n = x_n -x_0$ and some $\lambda_n>0$, it is not hard to convince oneself that also $\tilde{x}_n$ is a sequence that satisfies \eqref{eq:approxSeq}. If we can prove that $\lambda_n$ can be chosen in such a manner that $\norm{\tilde{x}_n - x_0} \geq \delta >0$, but still $\tilde{x}_n \in P$ for every $n$, we are done.

Due to the definition of $P$, we know that $\tilde{x}_n \in P$, if and only if  $\sprod{a_i, x_0 +\lambda_n v_n} \leq \alpha_i$ and $\sprod{b_j, x_0 +\lambda v_n} \leq \beta_j$ for each $i$ and $j$. Since $\sprod{a_i, x_0} = \alpha_i$ and $\sprod{a_i, x_n}\leq \alpha_i$, the conditions involving $a_i$'s will always be fulfilled. As for the ones involving $b_j$'s, notice that
	\begin{align*}
		\sprod{b_j, \lambda_n (x_n -x_0)} \leq \beta_j - \sprod{b_j,x_0}
	\end{align*}
	for each $j$. We assumed that $\beta_j-\sprod{b_j,x_0}>0$ for each $j$. Therefore, if $\sprod{b_j, v_n} \leq 0$ for all $j$, $\sprod{b_j, x_0 + \lambda_n v_n} \leq \beta_j$ for each possible $\lambda_n$, in particular for $\lambda_n$ so large that $\norm{\tilde{x}_n -x_0}_2 \geq \epsilon$. Otherwise we choose $\lambda_n$ in such a way so that for some $j_0$, $\sprod{b_{j_0}, \lambda_n v_n}  = \beta_{j_0} - \sprod{b_{j_0}, x_0}$. This has the consequence that
	\begin{align*}
		\norm{\lambda_n (x_n-x_0)} \geq \sprod{b_{j_0}, \lambda_n (x_n-x_0)} = \beta_{j_0}- \sprod{b_{j_0}, x_0} \geq \min_j (\beta_j - \sprod{b_j, x_0})=:\delta>0.
	\end{align*}
	For this choice of $\lambda_n$, we have $\tilde{x}_n \in P$ and $\norm{\tilde{x}_n-x_0} \geq \delta >0$ for all $n$, which we set out to prove.

	Now we will prove that $\mu(x_0)$ only depends on in which face $F$ of $P$ $x_0$ lies. Let us assume that $x_0$ and $\hat{x}_0$ both lie in the relative interior of the face $F$, which is described in the same way as above, i.e. that
	\begin{align*}
		\sprod{a_i, x_0} =\sprod{a_i, \hat{x}_0} &=\alpha_i \\
		\sprod{b_j, x_0}< \beta_j, \sprod{b_j, \hat{x}_0} &< \beta_j.  
	\end{align*}
	Now let $x$ and $z$ be such that $\sprod{(x-x_0)/ \norm{x-x_0}_2, z} = \mu(x_0)$. It is clear that $\sprod{a_i,x} \leq \alpha_i$ and $\sprod{b_j, x} \leq \beta_j$ for each $i$ and $j$, since $x \in P$. We may in fact assume that $\sprod{b_j, x}< \beta_j$ for each $j$: If there exists a $j$ with $\sprod{b_j, x} = \beta_j $, we may, by considering the segment between $x$ and $x_0$ (on which the function $x \to \sprod{(x-x_0)/\norm{x-x_0},z}$ is constant), find a vector $\tilde{x}$ with
	\begin{align*}
		\mu(x_0)=\sprod{\frac{x-x_0}{ \norm{x-x_0}}, z}= \sprod{\frac{\tilde{x}-x_0}{ \norm{\tilde{x}-x_0}}, z},
	\end{align*}
	but $\sprod{b_j,\tilde{x}}<\beta_j$, since $\sprod{b_j,x_0}<\beta_j$.
	
	Now, $\sprod{a_i, x + (\hat{x}_0 - x_0)} = \sprod{a_i, x} \leq \alpha_i$. Furthermore, if $\norm{x_0- \hat{x}_0}$ is small, we will have $\sprod{b_j, x + ( \hat{x}_0 - x_0)}~<~\beta_j$,
	since $\sprod{b_j,x}<\beta_j$. Hence, $x+ (\hat{x}_0 - x_0) \in P$ and therefore
	\begin{align*}
		\mu(\hat{x}_0) \geq \sprod{ \frac{x+(\hat{x}_0-x_0)-\hat{x}_0}{\norm{x+(\hat{x}_0-x_0)-\hat{x}_0}}, z} = \sprod{\frac{x-x_0}{\norm{x-x_0}},z} = \mu(x_0).
	\end{align*}
	A symmetric argument proves that $\mu(x_0) \geq \mu(\hat{x}_0)$. Hence, around each $x_0 \in \relint F$, there exists an open ball $U_{x_0}$ so that $\mu$ is constant in $F \cap U_{x_0}$. Since $\relint F$ is a connected set, this proves that $\mu$ is constant over the whole of $\relint F$.
\end{proof}

 \end{document}